\numberwithin{equation}{section}
\newtheorem{theorem}{Theorem}[section]
\newtheorem{lemma}[theorem]{Lemma}
\newtheorem{corollary}[theorem]{Corollary}
\newtheorem{conjecture}[theorem]{Conjecture}
\newtheorem{question}[theorem]{Question}
\theoremstyle{definition}
\newtheorem{remark}[theorem]{Remark}
\newtheorem{example}[theorem]{Example}
\newcommand{\popo}{\mathbb{P}^1 \times \mathbb{P}^1}
\renewcommand{\P}{\mathbb{P}}
\newcommand{\ffi}{\varphi}
\DeclareMathOperator{\Ass}{Ass}
\DeclareMathOperator{\Sym}{Sym}
\DeclareMathOperator{\rk}{rank}
\DeclareMathOperator{\codim}{codim}
\DeclareMathOperator{\depth}{depth}
\DeclareMathOperator{\h}{ht}
\newcommand{\sat}{{\rm{sat}}}
\newcommand{\fm}{\mathfrak{m}}
\newcommand{\fp}{\mathfrak{p}}
\begin{document}

\title[Powers of codimension 2
Cohen-Macaulay ideals]{Symbolic powers of codimension two Cohen-Macaulay
ideals}
\thanks{Last updated: (Final Version - May 4, 2020)}

\author{Susan Cooper}
\address{Department of Mathematics\\
University of Manitoba\\
420 Machray Hall, 186 Dysart Road\\
 Winnipeg, MB R3T 2N2 Canada}
\email{Susan.Cooper@umanitoba.ca}

\author{Giuliana Fatabbi}
\address{Dipartimento di Matematica e Informatica\\
Via Vanvitelli, 1 \\
06123, Perugia, Italy}
\email{giuliana.fatabbi@unipg.it}

\author{Elena Guardo}
\address{Dipartimento di Matematica e Informatica\\
Viale A. Doria, 6 \\
95100 - Catania, Italy} \email{guardo@dmi.unict.it}

\author{Anna Lorenzini}
\address{Dipartimento di Matematica e Informatica\\
Via Vanvitelli, 1 \\
06123, Perugia, Italy}
\email{annalor@dmi.unipg.it}

\author{Juan Migliore}
\address{Department of Mathematics \\
University of Notre Dame \\
Notre Dame, IN 46556, USA}
\email{migliore.1@nd.edu}

\author{Uwe Nagel}
\address{Department of Mathematics\\
University of Kentucky\\
715 Patterson Office Tower\\
Lexington, KY 40506-0027, USA}
\email{uwe.nagel@uky.edu}

\author{Alexandra Seceleanu}
\address{Department of Mathematics\\
University of Nebraska\\
Lincoln, NE 68588-0130, USA}
\email{aseceleanu@unl.edu}

\author{Justyna Szpond}
\address{Department of Mathematics\\
Pedagogical University of Cracow\\
Podchor\c a\.zych 2\\
PL-30-084 Krakow, Poland}
\email{szpond@gmail.com}

\author{Adam Van Tuyl}
\address{Department of Mathematics and Statistics,
McMaster University, Hamilton, ON, L8S 4L8, Canada}
\email{vantuyl@math.mcmaster.ca}

\keywords{symbolic powers, codimension two, arithmetically
Cohen-Macaulay, locally complete intersection, points in $\P^1 \times \P^1$}
\subjclass[2010]{13C40, 13F20, 13A15, 14C20, 14M05}

\begin{abstract}
Let $I_X$ be the saturated homogeneous ideal defining
a codimension two
arithmetically Cohen-Macaulay scheme $X \subseteq \mathbb{P}^n$,
and let $I_X^{(m)}$ denote its $m$-th symbolic power.
We are interested in when $I_X^{(m)} = I_X^m$. 
We survey what is known
about this problem when $X$ is locally a complete intersection,
and in particular, we review the classification of
when $I_X^{(m)} = I_X^m$ for all $m \geq 1$.  We then discuss
how one might weaken these hypotheses, but still obtain
equality between the symbolic and ordinary powers.   Finally,
we show that this classification allows one to: (1) simplify known
results about symbolic powers of ideals of points in $\popo$; 
(2) verify a conjecture of Guardo, Harbourne, and Van Tuyl,  and
(3) provide additional evidence to a conjecture of R\"omer.
\end{abstract}

\maketitle

\section{Introduction}\label{sec:intro}

Throughout this paper, $R = k[x_0,\ldots,x_n]$ where $k$ is an
algebraically closed field of characteristic zero.  For any
non-zero homogeneous ideal $I \subseteq R$,
the {\it $m$-th symbolic power} of $I$, denoted $I^{(m)}$, is
the ideal
\[
I^{(m)}= \bigcap_{\fp \in {\rm Ass}(I)}(I^mR_\fp \cap R),
\]
where $R_\fp$ denotes the localization of $R$ at the prime ideal
$\fp$, and ${\rm Ass}(I)$ is the set of associated primes of $I$.
In general, $I^m \subseteq I^{(m)}$, but the reverse containment
may fail.   Fixing $m$, the {\it ideal containment problem} asks
for the smallest integer $r$ such that $I^{(r)} \subseteq I^m$.
The papers of  Ein, Lazarsfeld, and Smith \cite{ELS}, Hochster
and Huneke \cite{HH}, and Bocci and Harbourne \cite{BH1,BH} are
among the first papers to systematically study this problem.
Recent work includes \cite{BCH,DHNSST,HaHu13,NS,S}; see also
the surveys \cite{DDGHN,SS} and book \cite{CHHVT}.

A complementary problem, and one which we consider in
this paper, is to ask for conditions on $I$ that force $I^m = I^{(m)}$
for all $m \geq 1$.  This problem is equivalent to asking when $r = m$
in the ideal containment problem, the smallest value that $r$ could have.
A classical result
in this direction is a result of Zariski and Samuel
\cite[Lemma 5, Appendix 6]{ZS}
that states $I^m = I^{(m)}$ for all $m \geq 1$ if $I$ is generated by a regular
sequence, or equivalently, a complete intersection. Ideals that have the
property $I^m = I^{(m)}$ for all $m\geq 1$ are called normally torsion free
because  their  Rees algebra is normal. The normally torsion free squarefree
monomial ideals were classified by  Gitler, Valencia and Villarreal \cite{GVV}.
They showed that a squarefree monomial ideal is normally torsion free if and
only if the corresponding hypergraph satisfies the max-flow min-cut property.
 Simis, Vasconcelos and Villarreal \cite{SVV} and separately Sullivant \cite{Su}
showed that edge ideals of graphs are normally torsion free if and only if
the graph is bipartite. Furthermore, Olteanu \cite{O} characterizes normally
torsion free ideals that are lexsegment.
More recent work on the equality between symbolic and ordinary powers
includes Morey's paper \cite{Mo} on
a local version of this question, Guardo, Harbourne,
and Van Tuyl's paper \cite{GHVT2} which
identifies all the  ideals of general points in
$\popo$ that satisfy $I^m = I^{(m)}$ for all $m \geq 1$, and
Hosry, Kim, and Validashti's work \cite{HKV} which identifies
some families of prime ideals $P$ such that $P^m \neq P^{(m)}$.

One family of ideals for which a complete answer is known to the
question of when $I^m = I^{(m)}$ is the family of 
ideals defining a codimension two arithmetically
Cohen-Macaulay subscheme of $\mathbb{P}^n$ that is also
locally a complete intersection.  In particular:

\begin{theorem}
\label{thm:all powers equal}
Let $I = I_X$ be the saturated homogeneous ideal defining a subscheme $X \subset\P^n$
such that
\begin{itemize}
   \item $\codim(X)=2$;
   \item $X$ is arithmetically Cohen-Macaulay;
   \item $X$ is locally a complete intersection.
\end{itemize}
Then the following conditions are equivalent:
\begin{itemize}
   \item[(a)] $I^{(n)} = I^n$;
   \item[(b)] $I^{(m)}=I^m$ for all $m\geq 1$;
   \item[(c)] $I$ has at most $n$ minimal generators.
\end{itemize}
Furthermore, if $m<n$, then $I_X^{(m)} = I_X^m$
regardless of the number of generators.
\end{theorem}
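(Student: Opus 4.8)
The plan is to reduce the ``furthermore'' assertion to a projective‑dimension estimate for $R/I^m$ and then to quote the graded minimal free resolution of $I^m$ that is the main construction of the paper. \emph{Step 1 (reduction to saturation).} Under the three hypotheses I would first show that $I^{(m)} = (I^m)^{\sat}$. Since $X$ is arithmetically Cohen--Macaulay, $R/I$ is unmixed, so $\Ass(I)=\operatorname{Min}(I)$ consists of height‑two primes and hence $I^{(m)}=\bigcap_{\fp\in\operatorname{Min}(I)}(I^mR_\fp\cap R)$ is the intersection of the primary components of $I^m$ over the minimal primes of $I$. It therefore suffices to show that $I^m$ has no embedded prime other than $\fm$. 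So let $\fp$ be a homogeneous prime with $I\subseteq\fp\subsetneq\fm$ that is not minimal over $I$, so $\h\fp\ge 3$. Because $X$ is a local complete intersection, $IR_\fp$ is generated by a regular sequence of length $2$; consequently $\operatorname{gr}_{IR_\fp}(R_\fp)$ is a polynomial ring in two variables over the Cohen--Macaulay ring $R_\fp/IR_\fp$, so $R_\fp/I^mR_\fp$ carries a finite filtration with free $(R_\fp/IR_\fp)$‑module quotients and is thus Cohen--Macaulay of dimension $\h\fp-2\ge 1$. Hence $\fp R_\fp\notin\Ass(R_\fp/I^mR_\fp)$, i.e.\ $\fp\notin\Ass(R/I^m)$. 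This gives $I^{(m)}=(I^m)^{\sat}$, so that $I^{(m)}=I^m$ is equivalent to $I^m$ being saturated, i.e.\ to $\fm\notin\Ass(R/I^m)$, i.e.\ to $\depth_R(R/I^m)\ge 1$; by the Auslander--Buchsbaum formula and $\depth R = n+1$, this is in turn equivalent to $\operatorname{pd}_R(R/I^m)\le n$.

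\emph{Step 2 (the projective dimension).} Here I would invoke the key tool of the paper, namely the explicit graded minimal free resolution of $I^m$ valid under precisely these hypotheses. Reading off its length gives $\operatorname{pd}_R(R/I^m)=\min\{\,m+1,\ \mu(I)\,\}$, where $\mu(I)$ is the number of minimal generators of $I$ (for $m=1$ this is the Hilbert--Burch resolution of length $2$, and the length stabilizes at $\mu(I)$ once $m\ge\mu(I)-1$). In particular, if $m<n$ then $\operatorname{pd}_R(R/I^m)\le m+1\le n$, so $\depth_R(R/I^m)\ge 1$, so $I^m$ is saturated and $I^{(m)}=I^m$ --- and this holds whatever the number of generators of $I$, which is exactly the ``furthermore'' claim. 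The same formula also yields the equivalence of (a), (b), (c): if $\mu(I)\le n$ then $\operatorname{pd}_R(R/I^m)\le\mu(I)\le n$ for every $m$, giving (c)$\,\Rightarrow\,$(b); if $\mu(I)>n$ then $\operatorname{pd}_R(R/I^n)=n+1$, so $\depth_R(R/I^n)=0$, $I^n$ is not saturated, and $I^{(n)}=(I^n)^{\sat}\ne I^n$, so (a) fails; and (b)$\,\Rightarrow\,$(a) is trivial.

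\emph{Main obstacle.} The homological bookkeeping above is routine; the one substantive ingredient is the resolution of $I^m$ used in Step 2, which is where the codimension‑two and arithmetically‑Cohen--Macaulay hypotheses enter (through the Hilbert--Burch structure of $I$), whereas the local complete intersection hypothesis is exactly what makes Step 1 go through. For the ``furthermore'' statement in isolation one needs only the weaker bound $\operatorname{pd}_R(R/I^m)\le m+1$; in principle this can be attacked by induction on $m$ from the exact sequences $0\to I^m/I^{m+1}\to R/I^{m+1}\to R/I^m\to 0$, using that the natural surjection $\Sym^m_{R/I}(I/I^2)\twoheadrightarrow I^m/I^{m+1}$ is an isomorphism on the punctured spectrum of $R/I$ by the local complete intersection hypothesis, but controlling the modules $I^m/I^{m+1}$ is itself delicate, so quoting the resolution of $I^m$ is both cleaner and, for the equivalence of (a)--(c), unavoidable.
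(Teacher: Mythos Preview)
Your proposal is correct and follows essentially the paper's own route: Step~1 is precisely Lemma~\ref{LCI} (you argue via $\Ass(R/I^m)$ rather than via the support of $I^{(m)}/(I^m)^{\sat}$, but this is the same localization idea), and Step~2 is exactly Corollary~\ref{prop:compare lci}, reading the projective dimension off the resolution of Theorem~\ref{thm:EN exact}. The only minor imprecision is that the local complete intersection hypothesis is also needed for the resolution in Step~2 (not only for Step~1), and that Theorem~\ref{thm:EN exact} carries the side condition $\min\{\mu(I)-1,\,m\}\le n$, which is however automatically satisfied in every instance you invoke it.
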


\noindent
Theorem \ref{thm:all powers equal} is a graded version of work
of Ulrich \cite{U}  and Morey \cite{Mo} which considered local versions
of this problem.  Given the current interest in the containment
problem for homogeneous ideals, the purpose of Section 2 is to
provide a proof of Theorem \ref{thm:all powers equal} in the graded
case.  In fact, we give a slightly more general result
by also proving a similar statement for codimension three arithmetically
Gorenstein schemes.

We are additionally interested in the graded minimal free 
resolutions (i.e., Betti numbers) of ideals that meet the hypotheses of  Theorem \ref{thm:all powers equal}.
Using the graded strands of a certain Koszul complex, we 
show that under suitable technical hypotheses,
one can construct the graded minimal free resolutions of powers
of perfect homogeneous ideals of codimension two
(see Theorem \ref{thm:EN exact}).   As we describe in
Remarks \ref{altapproach} and \ref{HSVremark}, these resolutions
could also be constructed using \cite{HSV},  \cite{ABW} or \cite{Tc};
we present a more targeted proof of this known result.

Our new contributions are in the remaining sections.
In Section 3,  we discuss the relative importance of the different hypotheses
in Theorem~\ref{thm:all powers equal} by providing a menagerie of examples.
Among other things, we show  that the assumption
that $X$ be arithmetically Cohen-Macaulay is essential in codimension two.
If it is dropped,
then it might happen that all symbolic and ordinary powers
of $I$ are equal but $I$ has more than $n$ generators --
see Example \ref{ex:5 lines} and
Theorem \ref{thm: complete intersections codim4}.
In contrast, we provide some evidence that
the condition on the codimension can be
extended to codimension three in the ACM situation --
see Examples \ref{ex:P3P4} and \ref{ex:scrolls}.
All of these considerations lead us to ask Question \ref{Q1}  as an indication
of possible future directions for this investigation.
Finally, we examine the hypothesis that $X$ be locally a complete intersection and propose
Conjecture \ref{conj:not LCI}.   A collection of examples concerning ideals of fat points in
$\mathbb{P}^2$ (see Examples \ref{ex:lci 1}, \ref{ex:lci 2} and \ref{ex:lci 3}) which satisfy
$I^{(m)} = I^m$ is also provided.

In Section 4 we present an application of Theorem
\ref{thm:all powers equal} to points in $\P^1 \times \P^1$.  Specifically,
we verify the  following statement, which was conjectured in
\cite[Conjecture 4.1]{GHVT}. (See Corollary \ref{P1xP1 result} for
a slightly more precise statement.)

\begin{corollary}
Let $I = I_X$ be the saturated defining ideal of an arithmetically Cohen-Macaulay
set of points in $\P^1 \times \P^1$.  Then $I^{(3)} = I^3$ if and
only if $I$ is a complete intersection
or $I$ is an almost complete intersection
(i.e., it has exactly three minimal generators).
\end{corollary}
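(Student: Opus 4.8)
The plan is to obtain this as the case $n=3$ of Theorem~\ref{thm:all powers equal}, carried out in the bihomogeneous coordinate ring $S=k[x_0,x_1,y_0,y_1]$ of $\popo$. First I would translate the hypotheses. Write $I=I_X\subseteq S$ for the saturated defining ideal of $X$. Since $X$ is a zero\nobreakdash-dimensional subscheme of the smooth surface $\popo$, the ideal $I$ has height two, so $\codim X=2$. The assumption that $X$ is arithmetically Cohen--Macaulay says precisely that $S/I$ is Cohen--Macaulay, i.e.\ that $I$ is a perfect ideal of grade two; such an ideal is unmixed with no embedded primes, and since $X$ is reduced its associated primes are exactly the (height two) primes of the points of $\popo$, none of which contains the irrelevant ideal $B$ of $\popo$, so $I$ is automatically $B$-saturated. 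Finally, the reduced points of the smooth surface $\popo$ constitute a local complete intersection (each is locally cut out by a regular sequence of two elements). Hence all three bulleted hypotheses of Theorem~\ref{thm:all powers equal} hold for $I$, inside the polynomial ring $S$ in four variables.

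Next I would check that Theorem~\ref{thm:all powers equal} applies in this bigraded situation, with the integer $n$ of that theorem equal to $3$ (the number of variables of $S$ minus one). The entire construction of Section~2 --- in particular the graded minimal free resolution of $I^m$ produced in Theorem~\ref{thm:EN exact} and the equivalences deduced from it --- uses only the homological structure of perfect codimension two ideals in a polynomial ring (the Hilbert--Burch theorem, the acyclicity lemma, and the Buchsbaum--Eisenbud exactness criterion), none of which is affected by the choice of grading; equivalently, one may forget the $\mathbb{Z}^2$-grading on $S$ down to total degree without altering the minimal number of generators $\mu(I)$, the associated primes of $I$, or the symbolic powers $I^{(m)}$. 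Consequently conditions (a)--(c) of Theorem~\ref{thm:all powers equal}, read with $n=3$, are equivalent: $I^{(3)}=I^3$ holds if and only if $I^{(m)}=I^m$ for every $m\ge 1$, and if and only if $I$ has at most three minimal generators. Moreover the final clause of that theorem gives $I^{(1)}=I$ and $I^{(2)}=I^2$ unconditionally, which is why only $I^{(3)}$ appears in the statement.

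Finally I would unwind the generator count. An ideal of height two requires at least two generators (Krull's height theorem), so the condition $\mu(I)\le 3$ is equivalent to $\mu(I)=2$ or $\mu(I)=3$, i.e.\ to $I$ being a complete intersection or an almost complete intersection; together with the previous step this is exactly the asserted equivalence, which settles \cite[Conjecture 4.1]{GHVT}. I expect the only genuine (if routine) obstacle to be the justification in the second paragraph --- verifying that nothing in Section~2 uses more than that the ambient ring is a polynomial ring --- because the tempting shortcut of realizing $X$ as a codimension two subscheme of $\P^3$ via the Segre embedding does not work: the corresponding union of lines in $\P^3$ is arithmetically Cohen--Macaulay of codimension two, but it is typically \emph{not} a local complete intersection, since two of the lines meet whenever the two points of $X$ share a coordinate.
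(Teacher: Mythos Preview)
Your overall strategy --- apply Corollary~\ref{thm:all powers equal 2} (equivalently Theorem~\ref{thm:all powers equal}) with $n=3$ to the height-two perfect ideal $I_X$ in the four-variable polynomial ring $S$, then read off $\mu(I_X)\le 3$ --- is exactly what the paper does. Your second paragraph is also fine: the arguments of Section~\ref{sec:res} are insensitive to the grading, and passing from the bigrading to total degree changes none of the invariants in play.

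The gap is in your verification that $I_X$ is a local complete intersection. The condition needed is the paper's algebraic one: $(I_X)_{\fp}$ must be a complete intersection for \emph{every} prime $\fp\neq\fm$ containing $I_X$. Your appeal to ``reduced points on the smooth surface $\popo$'' only controls primes corresponding to points of $\popo$, i.e.\ primes not containing the irrelevant ideal $B=(x_0,x_1)\cap(y_0,y_1)$. But there are height-three primes $\fp\supseteq(x_0,x_1)$ (or $\supseteq(y_0,y_1)$) with $I_X\subseteq\fp\subsetneq\fm$, and these lie over the locus where $\popo$ is not defined; your smooth-surface argument says nothing about them. In the standard grading on $S$ these primes are exactly the closed points of $\P^3$ where two or more of the lines meet --- precisely the points you worry about in your last paragraph.

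This is also where your final remark goes wrong. Forgetting the bigrading (which is not the Segre embedding; it simply reads the same ideal as defining a curve in $\P^3$) \emph{does} work, and is how the paper proceeds: Lemma~\ref{rem:points in product as lines} shows that whenever several of these lines meet at a point they lie in a common plane of one of the two pencils, so near that point the curve is a plane curve and hence locally a complete intersection. That lemma is the missing ingredient in your argument; once you invoke it, the rest of your proof goes through and coincides with the paper's.
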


\noindent
In addition, we show how  Theorem
\ref{thm:all powers equal} significantly simplifies earlier
arguments of Guardo and Van Tuyl \cite{GVT2} and Guardo,
Harbourne, and Van Tuyl \cite{GHVT}.
In fact, the original motivation of this project
was to prove \cite[Conjecture 4.1]{GHVT}.   We were initially
able to verify this conjecture using
Peterson's \cite{P} results on quasi-complete intersections, which
first suggested the importance of being locally a complete intersection.  Generalizing
our specialized proof lead to the much stronger results of this paper.

In the final section, we use
the minimal graded free resolution given in
Theorem \ref{thm:EN exact} to verify that for small powers of codimension two
perfect ideals that are locally complete intersections,
a question of R\"omer \cite{R} has an affirmative answer.  In
the case of ACM sets of points in $\P^1 \times \P^1$, we also have
a new proof of a result of Guardo and Van Tuyl
\cite{GVT2}.
\smallskip

\noindent
{\bf Acknowledgments.}  This project was started at the
Mathematisches Forschungsinstitut Oberwolfach (MFO) as part of the
mini-workshop ``Ideals of Linear Subspaces, Their Symbolic Powers
and Waring Problems'' organized by C. Bocci, E. Carlini, E.
Guardo, and B. Harbourne.     All the authors thank the MFO for
providing a stimulating environment.  We also thank Brian Harbourne
and Tomasz Szemberg
for their feedback on early drafts of the paper.
Cooper
acknowledges support from the NDSU Advance FORWARD program sponsored by the
National Science Foundation, HRD-0811239, as well as financial support provided by NSERC.
Fatabbi and Lorenzini were partially supported by GNSAGA (INdAM).
Guardo acknowledges the
financial support provided by Prin 2011 and GNSAGA (INdAM).
Migliore and Nagel were partially supported
by  Simons Foundation grants  (\#309556 for Migliore, \#317096 and \#636513 for
Nagel).
Seceleanu received support from MFO's NSF grant
DMS-1049268, ``NSF Junior Oberwolfach Fellows'' as well as
from NSF grant DMS--1601024 and EPSCoR grant OIA--1557417.
Szpond was partially supported by the National Science Centre, Poland, grant
2014/15/B/ST1/02197.
Van Tuyl acknowledges the
financial support provided by NSERC RGPIN-2019-05412.


\section{Background results}
\label{sec:res}

The purpose of this section is two-fold.  We first weave together
the various strands in the literature to present a 
proof of Theorem \ref{thm:all powers equal}, which is a graded version
of known results.  We then give a description of the graded minimal free
resolution of $I^m$ under suitable hypotheses on $I$, again using
known results in the literature.

For the convenience of the reader, we first recall the relevant
definitions.  Let $R = k[x_0,\ldots,x_n]$  and denote by $\fm$ the
homogeneous  maximal
ideal $\fm=(x_0,\ldots,x_n)$. For any homogeneous ideal $I \subseteq R$,
the {\it saturation} of $I$ is the
ideal defined by
$I^\sat=\bigcup_{k=1}^\infty I:\fm^k.$ We say that an ideal $I$
is saturated if $I=I^\sat$.
A homogeneous ideal $I \subseteq R$ is {\it Cohen-Macaulay} (or {\it
perfect}) if ${\rm depth}(R/I) = \dim (R/I)$.

In the following, we denote by $I_X$ the saturated homogeneous ideal
defining a projective scheme $X$.
A subscheme $X$ of $\mathbb P^n$ is {\em arithmetically Cohen-Macaulay (ACM)}  or {\em arithmetically Gorenstein}
if $R/I_X$ is a Cohen-Macaulay ring or a Gorenstein ring, respectively.  The {\it codimension} of $X$ is
${\rm codim(X)} = n - \dim(X)=\h(I_X)$.
The subscheme $X$ is an {\em almost complete intersection}
if the number of minimal generators is one more than
the codimension.
A subscheme $X$  is {\em locally a complete intersection} if the
localization of $I_X$ at any
prime ideal $\fp$ such that $\mathfrak p\neq \fm $ and $I_X\subseteq \fp$
is a complete intersection of codimension equal to the codimension of $X$.
A subscheme $X$  is a {\em generic complete intersection} if the
localization of $I_X$ at any
minimal associated prime ideal of $I_X$
is a complete intersection of codimension equal to the codimension of $X$.
Finally, a scheme $X$ is {\it equidimensional} if $I_X$ is an unmixed ideal,
that is, all of the associated primes of $I_X$ have the same height.

The next result is part of the folklore; we include a short
proof for completeness.

\begin{lemma} \label{LCI}
Let $X \subset \mathbb P^n$ be locally a complete
intersection scheme of any codimension. Then $I_X^{(m)}$ is equal
to the saturation of $I_X^m$. \end{lemma}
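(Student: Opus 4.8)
The plan is to identify the symbolic power $I_X^{(m)}$ with the saturation of $I_X^m$ by comparing both ideals primary component by primary component, using that under our hypotheses the only way $I_X^m$ can fail to be saturated is by acquiring $\fm$ as an embedded associated prime. First I would recall the general description of a symbolic power in terms of primary decomposition: if $I_X$ has no embedded primes, then $\mathrm{Ass}(I_X)$ consists precisely of the minimal primes $\fp_1,\dots,\fp_s$ of $I_X$, each necessarily different from $\fm$. By definition $I_X^{(m)}=\bigcap_{i=1}^s (I_X^m R_{\fp_i}\cap R)$, which is exactly the intersection of the $\fp_i$-primary components appearing in an irredundant primary decomposition of $I_X^m$; in other words, $I_X^{(m)}$ is obtained from $I_X^m$ by discarding all primary components belonging to primes that are \emph{not} among the $\fp_i$.

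Next I would argue that the only associated prime of $R/I_X^m$ that is not already a minimal prime of $I_X$ is the irrelevant ideal $\fm$. The local complete intersection hypothesis is what makes this work: for any prime $\fp$ with $I_X\subseteq\fp$ and $\fp\neq\fm$, the localization $(I_X)_\fp$ is generated by a regular sequence in $R_\fp$, hence $(I_X^m)_\fp=((I_X)_\fp)^m$ is $(\fp R_\fp)$-primary (a power of a complete intersection has no embedded primes, by the classical Zariski–Samuel result quoted in the introduction applied locally). Therefore no prime strictly between some $\fp_i$ and $\fm$, and no prime incomparable considerations aside, can be associated to $R/I_X^m$ except possibly $\fm$ itself. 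Consequently an irredundant primary decomposition of $I_X^m$ has the form $I_X^m = Q_1\cap\cdots\cap Q_s\,(\cap\,Q_{\fm})$, where $Q_i$ is $\fp_i$-primary and $Q_\fm$, if present, is $\fm$-primary.

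Finally I would assemble the conclusion. On one hand, $I_X^{(m)}=Q_1\cap\cdots\cap Q_s$ by the primary-component description above. On the other hand, saturating $I_X^m$ removes exactly the $\fm$-primary component: $(I_X^m)^{\sat}=\bigcup_k (I_X^m:\fm^k)$ equals the intersection of those $Q_j$ whose radical does not contain... more precisely $Q_\fm:\fm^k = R$ for $k\gg 0$ while $Q_i:\fm^k=Q_i$ for all $k$ since $\fm\not\subseteq\fp_i$, so $(I_X^m)^{\sat}=Q_1\cap\cdots\cap Q_s$ as well. Hence $I_X^{(m)}=(I_X^m)^{\sat}$, as claimed. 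The one genuine obstacle is the middle step — verifying that a local complete intersection (rather than merely generic complete intersection) ideal has the property that $I_X^m$ picks up no embedded prime other than $\fm$; this is where one uses that the localization is a complete intersection at \emph{every} non-irrelevant prime containing $I_X$, not just at the minimal ones, so that the Zariski–Samuel normality of powers of regular sequences applies uniformly away from $\fm$.
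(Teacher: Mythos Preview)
Your argument is correct and uses essentially the same idea as the paper: both hinge on the observation that, for any non-maximal prime $\fp \supseteq I_X$, the localization $(I_X)_\fp$ is a complete intersection, so $(I_X^m)_\fp$ is unmixed by Zariski--Samuel. One minor slip in wording: $(I_X^m)_\fp$ is not literally $\fp R_\fp$-primary when $\fp$ properly contains a minimal prime of $I_X$---it is merely unmixed, which is what your parenthetical correctly states and what the argument actually needs to conclude $\fp\notin\Ass(R/I_X^m)$. The paper packages the same content by showing that the quotient $I_X^{(m)}/(I_X^m)^{\sat}$ has empty support via localization, rather than by writing down an explicit primary decomposition of $I_X^m$; your route is a touch more concrete, but neither approach offers a real advantage over the other.
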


\begin{proof}
We denote for brevity $I=I_X$. Let $m$ be a positive integer and 
$\fp\in \Ass(R/I^m)\setminus\{\fm\}$. Then
$\fp R_\fp\in \Ass(R_\fp/(I^m)_\fp)=\Ass(R_\fp/(I_\fp)^m)=
\Ass(R_\fp/I_\fp)$, where the first equality holds because 
localization commutes with taking powers of ideals and the second holds 
because $I_\fp$ is a complete intersection and powers of complete 
intersections are unmixed (\cite[Appendix 6, Lemma~5]{ZS}). Thus 
$\fp\in \Ass(R/I)$ and the desired conclusion follows because the 
argument above yields that $\Ass(R/I^m)$ is in $\Ass(R/I)\cup\{\fm\}$.
\end{proof}

\begin{remark}
Lemma \ref{LCI} can be rephrased to say that for a locally complete
intersection scheme $X$ with  ideal sheaf $\mathcal I_X$, we have \[
I_X^{(m)} = \bigoplus_{t \geq 0} H^0(\mathcal I_X^m (t)).
\]
Another way to interpret Lemma \ref{LCI} is that the only possible embedded
prime of $I^m$ is the maximal ideal $\mathfrak{m}$.  Hence
$I^m = I^{(m)}$ if and only if ${\rm depth}(R/I^m) > 0$.
\end{remark}

In the following, $\mu(J)$ stands
for the cardinality of a minimal set of generators for an ideal $J$.
 The
 next theorem gives necessary and sufficient conditions for  the
 equality of ordinary and symbolic powers for two families of
ideals in terms of the number of
 generators of an ideal.
 Our statement is more general than the one presented in the introduction
 since we can also say something about codimension three arithmetically Gorenstein
 schemes that are also locally a complete intersection.

\begin{theorem}
\label{thm:gen all powers equal}
Let $I = I_X$ be the saturated homogeneous ideal defining a subscheme $X \subset\P^n$
such that  one of the two sets of assumptions listed below holds:

\begin{minipage}[l]{0.45\textwidth}
\underline{Assumptions I:}
\begin{itemize}[leftmargin=*]
   \item $\codim(X)=2$;
   \item $X$ is arithmetically Cohen-Macaulay;
   \item $X$ is locally a complete intersection
\end{itemize}
\end{minipage}
\quad or \quad
\begin{minipage}[c]{0.45\textwidth}
\underline{Assumptions II:}
\begin{itemize}[leftmargin=*]
   \item $\codim(X)=3$;
   \item $X$ is arithmetically Gorenstein;
   \item $X$ is locally a complete intersection.
\end{itemize}
\end{minipage}
\vspace{.1cm}

\noindent
Then the following conditions are equivalent:
\begin{itemize}
\item[(a)] $I^{(n)} = I^n$; 
   \item[(b)] $I^{(m)}=I^m$ for all $m\geq 1$;
   \item[(c)] $I$ has at most $n$ minimal generators.
\end{itemize}
Furthermore, if $m<n-1$ for Assumptions II and $n$ even, or if $m<n$ in all other cases,
then $I_X^{(m)} = I_X^m$ regardless of the number of generators.
\end{theorem}

\begin{proof}
Recall that for any graded homogeneous ideal $J$
of $R = k[x_0,\ldots,x_n]$, if we localize at the maximal ideal $\mathfrak{m}$,
then $\mu(J) = \mu(J_{\mathfrak{m}})$.  Furthermore,
${\rm depth}(R/J) = {\rm depth}(R_{\mathfrak{m}}/J_{\mathfrak{m}})$ by
\cite[Proposition 1.5.15]{BrunsHerzog}.
Therefore all of our assumptions localize and thus one can reduce to the case where $I$ is an ideal in a regular local ring $R$.

The implication $(a)\Rightarrow (b)$ follows by the proofs of
Morey's results \cite[Theorems 3.2 and 3.3]{Mo} and $(b)\Rightarrow (c)$ follows by \cite[Corollary 3.4]{Mo}.
Next we explain how the last conclusion and the remaining implications follow in a more general context. Note that by \cite[Theorem 2.3]{H0} the classes of ideals considered in this theorem are licci (i.e., in the linkage
class of a complete intersection).

For the remaining implications we reason as follows. Assume that $I$ is a licci ideal  that is locally a complete intersection on the punctured spectrum of $R$ and has $\mu(I)\leq n$.  By \cite[Theorem 1.14]{H0} licci ideals have Cohen-Macaulay Koszul homology for their generating sequences. Then  \cite[Remark 2.10 and Corollary 2.13]{U}     yield
 \[\depth{(R/I^m)}\geq n+1-\mu(I), \ \ \forall m\geq 1.  \]  
 Since $\mu(I)\leq n$, this estimate gives that $\depth{(R/I^m)}\geq 1,\ \forall m\geq 1.$   Therefore the ideal $I^m$ is saturated, and
by Lemma \ref{LCI} we conclude that $I^{(m)}=I^m$.  This gives that $(c)\Rightarrow(b)$. Finally, $(b)\Rightarrow(a)$ is clear.

Working still under the assumption that $I$ is a licci ideal  that is locally a complete intersection on the punctured spectrum of $R$, then \cite[Remark 2.10]{U} gives that $\depth(R/I^j)\geq n+2-\h(I)-j$ for all $j$ such that $1\leq j\leq n+2-\h(I)$.
Assume that $\h(I)=2$. Then for $m < n = n+2-\h(I)$, the depth estimate above gives $\depth(R/I^m)\geq n-m>0$. Again,  the ideal $I^m$ is saturated, and
by Lemma \ref{LCI} we conclude that $I^{(m)}=I^m$.
Now assume that $\h(I)=3$. In this case the depth estimate above yields that $I^{(m)}=I^m$ for $m<n-1$.
When $X$ is an arithmetically Gorenstein variety of codimension 3 (Assumptions II), we may also consider the complex $\mathfrak{D}_{\bullet}$ defined in \cite{KU}. The strand $\mathfrak{D}_m$ of the complex will be a resolution for $I^m$ by \cite[Theorem 6.25]{KU}, which gives ${\rm depth}(R/I^m) =n+1-\min\{\mu(I),1+2\lfloor\frac{m+1}{2}\rfloor\} \geq n-2\lfloor\frac{m+1}{2}\rfloor$. For $m=n-1$ we obtain ${\rm depth}(R/I^{n-1})>0$  if $n$  is odd. This proves the  last statement in our theorem.
 \end{proof}

\begin{remark}
\label{quest:licci}
The two classes of ideals considered in Theorem \ref{thm:gen all powers equal} 
are licci (linked to complete intersections). 
The equivalence of parts $(b)$ and $(c)$ of Theorem \ref{thm:gen all powers equal} remains valid 
in the case of licci ideals $I$. Indeed, the implication $(c)\Rightarrow (b)$ is shown above and $(b)\Rightarrow (c)$ follows from the estimate $\depth(R/I^m) =n+1-\mu(I)$ for $m\gg0$ discussed in the preceding proof.
It would be interesting to 
investigate whether the equivalence between these statements and 
condition (a) remains valid for licci ideals. We pose this problem as  Question \ref{Q1} (iii).
\end{remark}

\begin{remark} \label{PowersEquality}
Cases of Theorem \ref{thm:gen all powers equal} were previously known:

(i) If $X$ is a set of points in $\mathbb P^2$, then $I_X^{(m)}
\neq I_X^m$ for any $m \ge 2$ if and only if $X$ is not a complete
intersection. This follows from \cite[Theorem 2.8]{HU}, which
gives that $R/I_X^m$ is not Cohen-Macaulay for any $m \ge 2$
unless $X$ is a complete intersection.

(ii) If $C \subseteq \mathbb{P}^3$ is a curve that is locally a complete intersection
and is an almost complete intersection, then
Theorem \ref{thm:gen all powers equal}
gives \cite[Corollary 2.7]{P}.

(iii)  A connection between the number of generators 
and the equality of the regular and symbolic powers in a special
case can be found in \cite[Corollary 2.5]{H}.  In particular, if $R$
is a regular local ring with $\dim R = 3$, and if $P$ is
a height two prime ideal with three or more generators, it
is shown that $P^m \neq P^{(m)}$ for all $m \geq 1$.  
See also the discussion of \cite[Remark 1.27]{G}.

(iv)  As we will show in Section 4, any ACM set of points in $\popo$
is also locally a complete intersection.
Theorem \ref{thm:gen all powers equal}
thus implies that if $X$ is  any ACM set of points in $\mathbb
P^1 \times \mathbb P^1$, then we get $I_X^{(2)} = I_X^2$.  This was
first shown in \cite{GHVT}.
See also Corollary \ref{P1xP1 result}.

\end{remark}

\begin{remark}
The {\it big height} of an ideal $I$, denoted ${\rm bigheight}(I)$,
is the maximum among the heights of the minimal primes of $I$.
Huneke asked if $I^{(m)} = I^m$ for all $m \leq  {\rm bigheight}(I)$, then is it true
that $I^{(m)} = I^m$ for all $m \geq 1$?  In \cite{GHVT} it was shown
that the answer to this question is negative by showing that
when $X$ is an ACM set of reduced points in $\popo$, then ${\rm bigheight}(I_X) = 2$, but one needs to check if $I_X^{(3)} = I_X^3$ to guarantee that $I_X^{(m)} = I_X^m$ for all $m \geq 1$.   In fact, Theorem \ref{thm:gen all powers equal} shows that we may have to check powers arbitrarily larger than $ {\rm bigheight}(I)$ to guarantee
that $I^{(m)} = I^m$ for all $m \geq 1$.  
\end{remark}

The remainder of this section is devoted to 
describing the graded minimal free
resolution of $I^m$ when $I$ is a perfect ideal of codimension two
under some additional hypotheses. The length of these complexes determines whether
$I^m=I^{(m)}$ and hence are closely connected to Theorem \ref{thm:gen all powers equal}.

Let us start with some preparation.

\begin{lemma}
\label{lem:finite homology}
Consider a complex of finitely generated $R$-modules
\[
0 \to F_p \stackrel{\partial_p}{\longrightarrow} F_{p-1} \to
\cdots \to F_0 \stackrel{\partial_0}{\longrightarrow} M \to 0,
\]
where the modules $F_0,\ldots,F_p$ are free $R$-modules. If the
complex has homology of finite length, $\partial_0$ is surjective,
and $p \le n$, then the complex is exact.
\end{lemma}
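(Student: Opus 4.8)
The plan is to use the Buchsbaum--Eisenbud acyclicity criterion together with a depth-sensitivity argument, but the cleanest route is probably via the Peskine--Szpiro acyclicity lemma (the ``acyclicity lemma'' of Peskine and Szpiro). Recall that this lemma states: if $0 \to F_p \to \cdots \to F_1 \to F_0$ is a complex of finitely generated modules over a Noetherian ring such that, for each $i \ge 1$, one has $\depth F_i \ge i$ and the homology $H_i$ of the complex at $F_i$ satisfies $\depth H_i = 0$ or $H_i = 0$, then $H_i = 0$ for all $i \ge 1$. First I would set up the complex as the truncation $0 \to F_p \to \cdots \to F_1 \to F_0 \to 0$ (forgetting $M$, which plays the role of $H_0$), keeping in mind that what we must show is that $H_i = 0$ for $i = 1, \dots, p$; exactness at $F_0$ together with surjectivity of $\partial_0$ then gives the asserted exact sequence $0 \to F_p \to \cdots \to F_0 \to M \to 0$.

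The key steps are then: (1) Since each $F_i$ with $i \ge 1$ is free and nonzero (or zero, which is harmless), we have $\depth F_i = \depth R = n+1 \ge p \ge i$, so the depth hypothesis on the free modules is automatic. Actually we only need $i \le p$, which holds since $p \le n < n+1$. (2) For $i \ge 1$, the homology $H_i$ has finite length by hypothesis, so either $H_i = 0$ or $\depth H_i = 0$ (a nonzero finite-length module over the graded ring, or after localizing at $\fm$, is supported only at the maximal ideal and hence has depth zero). (3) Apply the acyclicity lemma to conclude $H_i = 0$ for all $i \ge 1$. (4) Finally, exactness at $F_0$: the homology at $F_0$ of the truncated complex is $F_0 / \mathrm{im}\, \partial_1$, but combined with $\partial_0$ surjective and the homology of the \emph{full} complex at $F_0$ (namely $\ker \partial_0 / \mathrm{im}\, \partial_1$) being of finite length, one checks directly that the full complex is exact — indeed once $H_1 = \cdots = H_p = 0$ the only possible nonzero homology of the original complex is at $M$ itself, i.e. the cokernel, which is zero since $\partial_0$ is surjective, and at $F_0$, which is controlled by the finite-length hypothesis and the exactness already obtained above.

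The main obstacle — really the only point requiring care — is verifying that the finite-length homology hypothesis correctly feeds the acyclicity lemma at the last spot $F_0$, since $M$ is not assumed free and the statement bundles the homology at $F_0$ together with the surjectivity of $\partial_0$. One clean way around this: apply the acyclicity lemma directly to the complex $0 \to F_p \to \cdots \to F_1 \to F_0 \to 0$ to kill $H_1, \dots, H_p$, obtaining an exact sequence $0 \to F_p \to \cdots \to F_1 \to F_0 \to C \to 0$ where $C = \mathrm{coker}\,\partial_1$. Since $\partial_0$ factors through $C$ and is surjective onto $M$, the induced map $C \to M$ is surjective; its kernel is exactly the homology of the original complex at $F_0$, which has finite length. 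But $C$ has a free resolution of length $p \le n$, so $\depth C \ge (n+1) - p \ge 1$, hence $C$ has no finite-length submodule, forcing that kernel to be $0$. Therefore $C \cong M$ and the original complex is exact, as desired. I expect this last depth estimate on $C$ to be the step one must state explicitly; everything else is a direct invocation of standard homological lemmas.
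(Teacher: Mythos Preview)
Your proof is correct and is essentially a fleshed-out version of the paper's one-line sketch: the paper simply says ``break the complex into short exact sequences and compute local cohomology, or apply the New Intersection Theorem of \cite{Ho,PS},'' and your use of the Peskine--Szpiro acyclicity lemma (from the same reference \cite{PS}) together with the Auslander--Buchsbaum estimate $\depth C \ge (n+1)-p \ge 1$ is precisely one way to carry this out. The final step you flag---that $C$ has positive depth and hence no nonzero finite-length submodule---is exactly the point that makes the hypothesis $p \le n$ (rather than $p \le n+1$) necessary, and you have handled it cleanly.
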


\begin{proof}
Break the complex into short exact sequences and compute
local cohomology, or apply the New Intersection Theorem \cite{PS}.
\end{proof}

Recall that a homomorphism $\ffi: F \to G$ of free $R$-modules is
called \emph{minimal} if its image is contained in $\fm G$.
 This means that any coordinate matrix of $\ffi$ has no unit entries.

\begin{lemma}
\label{lem:EN-complex}
Let $I \subset R$ be a homogeneous ideal  admitting a free graded presentation
\[
 F \stackrel{\ffi}{\longrightarrow} G \to I \to 0.
\]
Then, for every integer $m > 0$,  there is a complex of graded
$R$-modules
\[
\begin{array}{l}
0 \rightarrow \bigwedge^{m}  F \to \bigwedge^{m-1} F \otimes
\Sym^{1}  G \rightarrow \bigwedge^{m-2}  F \otimes {\Sym}^{2}
G \rightarrow \cdots \hspace{2in}   \\ \\

\hfill \rightarrow \bigwedge^2  F \otimes {\Sym}^{m-2}  G
\rightarrow  F \otimes {\Sym}^{m-1}  G \rightarrow {\Sym}^m  G
\rightarrow {\Sym}^m I \rightarrow 0
\end{array}
\]
whose right-most map is surjective. Moreover, if $\ffi$ is a minimal map, then all the
maps in the complex above are minimal.
\end{lemma}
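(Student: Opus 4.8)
The plan is to realize the displayed complex as the degree-$m$ graded component of the Koszul complex of a single map of modules over the symmetric algebra $S:=\bigoplus_{t\ge 0}\Sym^t G$ of $G$. First I would form the $S$-module homomorphism obtained by composing $\ffi$ with $G=\Sym^1 G\hookrightarrow S$ and extending $S$-linearly,
\[
\psi\colon F\otimes_R S\longrightarrow S,\qquad f\otimes w\longmapsto \ffi(f)\,w,
\]
which carries the summand $F\otimes_R\Sym^t G$ into $\Sym^{t+1}G$. Then I would record the standard fact that the symmetric algebra functor is right exact, so the presentation $F\xrightarrow{\ffi}G\to I\to 0$ yields an isomorphism of graded $R$-algebras $S/(\ffi(F)S)\cong\Sym_R I$; equivalently $\operatorname{coker}\psi\cong\Sym_R I=\bigoplus_{m\ge 0}\Sym^m I$, and in degree $m$ this reads $\Sym^m I\cong \Sym^m G/\bigl(\ffi(F)\cdot\Sym^{m-1}G\bigr)$.

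Next I would take the Koszul complex $K_\bullet=K_\bullet(\psi)$ of the $S$-linear map $\psi$ out of the free $S$-module $F\otimes_R S$. Using $\bigwedge^k_S(F\otimes_R S)\cong\bigl(\bigwedge^k_R F\bigr)\otimes_R S$, each term $K_k$ is $\bigl(\bigwedge^k_R F\bigr)\otimes_R S$, a graded free $R$-module (since $F$ and $G$ are graded free), and the augmentation $K_0=S\to\operatorname{coker}\psi=\Sym_R I$ makes $K_\bullet\to\Sym_R I\to 0$ a complex. I would then grade everything by \emph{total symmetric degree}, assigning $\bigl(\bigwedge^k_R F\bigr)\otimes_R\Sym^t G$ the degree $k+t$ and $\Sym^t I$ the degree $t$. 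The Koszul differential, which on decomposables is
\[
e_{i_1}\wedge\cdots\wedge e_{i_k}\otimes w\longmapsto\sum_{\ell=1}^{k}(-1)^{\ell-1}\,e_{i_1}\wedge\cdots\widehat{e_{i_\ell}}\cdots\wedge e_{i_k}\otimes\ffi(e_{i_\ell})\,w,
\]
sends degree $k+t$ to degree $(k-1)+(t+1)=k+t$, and the augmentation is likewise homogeneous of degree $0$; hence $K_\bullet\to\Sym_R I\to 0$ splits as a direct sum of complexes indexed by the total degree $m$. The degree-$m$ strand is exactly
\[
0\to\bigwedge^{m}F\otimes\Sym^0 G\to\bigwedge^{m-1}F\otimes\Sym^1 G\to\cdots\to F\otimes\Sym^{m-1}G\to\Sym^m G\to\Sym^m I\to 0,
\]
which, after identifying $\Sym^0 G=R$, is the asserted complex. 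It is a complex because $K_\bullet$ is and the augmentation composes to zero, and its right-most map is the degree-$m$ part of the surjection $S\twoheadrightarrow\Sym_R I$, hence surjective (equivalently, $\Sym^m$ applied to $G\twoheadrightarrow I$ is onto).

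For the last assertion, suppose $\ffi$ is minimal, i.e. $\ffi(F)\subseteq\fm G$. Then in the differential above each term $\ffi(e_{i_\ell})\,w$ lies in $\fm\cdot\Sym^{t+1}G$, so the image of every map $\bigwedge^k F\otimes\Sym^{m-k}G\to\bigwedge^{k-1}F\otimes\Sym^{m-k+1}G$ of the strand --- including the case $k=1$, namely $\psi$ restricted to degree $m$, $F\otimes\Sym^{m-1}G\to\Sym^m G$ --- is contained in $\fm$ times the target. Thus every map between the free $R$-modules occurring in the complex is minimal, as required.

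I do not expect a serious obstacle here; the construction is classical in spirit (the degree-$m$ strand of a Koszul/symmetric-type complex attached to a presentation). The only points that genuinely need care are the right-exactness identification $\operatorname{coker}\psi\cong\Sym_R I$ in each degree, and the bookkeeping that the total-symmetric-degree grading is respected by both the Koszul differential and the augmentation, so that the degree-$m$ component is a bona fide subcomplex with precisely the stated terms.
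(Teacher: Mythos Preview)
Your proof is correct and follows essentially the same route as the paper's: both realize $\Sym I$ as the quotient of $\Sym G$ by the image of $\psi$ (equivalently, by the ideal $C$ generated by the linear forms $\sum_i\ffi_{ij}t_i$), take the Koszul complex of this $\Sym G$-linear map out of the free module $F\otimes_R\Sym G$, and then extract the degree-$m$ strand; the minimality observation is likewise identical. The only cosmetic difference is that the paper phrases the grading as a bigrading on $R[t_1,\dots,t_r]$ with $\deg t_i=(1,\deg f_i)$ and takes the $t$-degree-$m$ strand (with the usual Koszul shifts), whereas you package the same bookkeeping as a single ``total symmetric degree'' $k+t$.
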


\begin{proof}
Let $\Sym I$  denote the symmetric algebra of $I$. A presentation for $\Sym I$  can be
obtained from the given presentation of $I$  by applying the symmetric algebra functor,
which yields the exact sequence $$
F \otimes \Sym G  {\longrightarrow} \Sym G \to \Sym I \to 0.$$
Concretely, if $\rk G=r$ and $t_1,\ldots, t_r$ are new indeterminates, then
the surjection $\Sym G\cong R[t_1,\ldots, t_r] \to \Sym I$ is obtained by mapping each
of the variables $t_i$ to a generator $f_i$ of $I$. We view this as a bigraded map,
by assigning $\deg(t_i)=(1,\deg(f_i))$ and declaring that each element  $g\in R$ has
bidegree $(0,\deg(g))$ in $\Sym  G$. We shall refer to the first component of this
bigrading as the $t$-degree.
If $\rk F=s$, then the kernel of the map ${\Sym}\ G \to {\Sym}\ I$ is the ideal
$C=\left(\sum_{i=1}^r \ffi_{ij}t_i ~|~ 1\leq j\leq s\right)$, where $ \ffi_{ij}$ are the
entries of a coordinate  matrix representing $\ffi$. In this notation, the short exact
sequence above gives $\Sym I \cong (\Sym  G)/C$.

The Koszul complex $\bf{K}_\bullet$ on the generators of  the ideal $C$ of $\Sym G$
takes the following form:
$$
0 \rightarrow \bigwedge^{s}  F(-s) \otimes \Sym G  \rightarrow \bigwedge^{s-1}
F(-s+1) \otimes \Sym G\rightarrow \cdots \rightarrow\bigwedge^{1}  F(-1) \otimes
\Sym G  \rightarrow \Sym \ G \rightarrow 0.
$$
It is a  complex of free $\Sym G$-modules and the graded twists refer to the
$t$-grading.
The linear strand in $t$-degree $m$ of the Koszul complex is the following complex of
free $R$-modules, which also appears in  \cite[p.\ 597]{Ei}:
\[
\begin{array}{l}
0 \rightarrow \bigwedge^{m}  F \to \bigwedge^{m-1} F \otimes_R  \Sym^{1}
 G \rightarrow \bigwedge^{m-2}  F \otimes_R {\Sym}^{2}  G
\rightarrow \cdots \hspace{2in}   \\ \\

\hfill \rightarrow \bigwedge^2  F \otimes_R {\Sym}^{m-2}  G
\rightarrow  F \otimes_R {\Sym}^{m-1}  G \rightarrow {\Sym}^m  G.
\end{array}
\]
Moreover, the given presentation for $\Sym I$ induces the exact sequence
\[
F \otimes  {\Sym}^{m-1}  G \rightarrow {\Sym}^m  G \rightarrow {\Sym}^m I
\rightarrow 0.
\]
Combining the two complexes gives the desired conclusion. The differentials
in the family of complexes described above involve only the elements $\ffi_{ij}$,
thus minimality for any of these complexes is equivalent to the minimality of $\ffi$.
\end{proof}

The above family of complexes can be used to extract information on the minimal free
resolution for the powers of $I$ in several cases. We introduce the
notation $V(J)=\{\fp \in {\rm Proj}(R)\; \vert \; J\subseteq \fp, \fp\neq \fm\}$  for the elements of the
 punctured spectrum of $R$ containing $J$.

\begin{theorem}
   \label{thm:EN exact}
Consider a graded minimal free resolution of a homogeneous perfect
ideal $I \subset R = k[x_0,\ldots,x_n]$ of codimension two
\[
0 \to F \stackrel{\ffi}{\longrightarrow} G \to I \to 0.
\]
Let $m$ be a positive integer and assume further that $I$ is locally a complete intersection and
$\min \{ \rk G - 1, \ m\} \le n$.
Then $\Sym^m I \cong I^m$ and  the complex in Lemma \ref{lem:EN-complex} is a graded minimal
free resolution of $I^m$.
\end{theorem}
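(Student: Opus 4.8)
The plan is to show that the complex from Lemma~\ref{lem:EN-complex}, which we denote $\mathbf{C}_\bullet$ (together with its augmentation onto $\Sym^m I$), is exact by checking the hypotheses of Lemma~\ref{lem:finite homology}, and then to identify $\Sym^m I$ with $I^m$. First some bookkeeping: since $I$ is perfect of codimension two, counting ranks in $0 \to F \stackrel{\ffi}{\to} G \to I \to 0$ gives $\rk F = \rk G - 1 = \mu(I) - 1$, and $\ffi$ is a minimal map because the resolution is minimal. Because $\bigwedge^j F = 0$ for $j > \rk F$, the complex $\mathbf{C}_\bullet$ actually has length $p = \min\{m, \rk F\} = \min\{m, \rk G - 1\}$, which is $\le n$ by hypothesis; by the last sentence of Lemma~\ref{lem:EN-complex} all its maps are minimal, and its right-most map onto $\Sym^m I$ is surjective.

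The crucial step is to show that the homology of $\mathbf{C}_\bullet$ in positive homological degrees is supported only at $\fm$, hence has finite length. Fix a homogeneous prime $\fp \ne \fm$ and localize. If $I \not\subseteq \fp$ then $I_\fp = R_\fp$; if $I \subseteq \fp$ then the local complete intersection hypothesis gives that $I_\fp$ is a complete intersection of codimension two, say $I_\fp = (a,b)$ with $a,b$ a regular sequence. In either case, up to a change of bases of $F_\fp$ and $G_\fp$, the map $\ffi_\fp$ decomposes as the (Koszul) first-syzygy map of the two generators of $I_\fp$ — or the zero map when $I_\fp = R_\fp$ — in direct sum with an identity map; consequently the ideal $C_\fp \subseteq \Sym G_\fp$ with $\Sym G_\fp / C_\fp = \Sym I_\fp$ is generated by a regular sequence of $\rk F$ elements (e.g.\ by $bt_1 - at_2$ together with $\rk F - 1$ of the variables $t_i$ in the case $I \subseteq \fp$). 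Hence the Koszul complex on $C_\fp$ resolves $\Sym I_\fp$, and its $t$-degree-$m$ strand, which is exactly $\mathbf{C}_\bullet \otimes_R R_\fp$, is a resolution of $\Sym^m I_\fp$; in particular it is exact, and moreover $\Sym^m I_\fp = I_\fp^m$ since a regular sequence (and a fortiori the unit ideal) is of linear type. Therefore the positive-degree homology of $\mathbf{C}_\bullet$ vanishes away from $\fm$, so it has finite length, and Lemma~\ref{lem:finite homology} applies: $\mathbf{C}_\bullet$ is exact, hence a graded minimal free resolution of $\Sym^m I$.

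It remains to see that the natural surjection $\Sym^m I \twoheadrightarrow I^m$ is an isomorphism. From the resolution just obtained, $\Sym^m I$ has projective dimension $p = \min\{m, \rk G - 1\}$, so the graded Auslander--Buchsbaum formula gives $\depth(\Sym^m I) = n+1 - p \ge 1$; in particular $\Sym^m I$ has no nonzero submodule of finite length. On the other hand the local computations above show $(\Sym^m I)_\fp = I_\fp^m$ for every homogeneous prime $\fp \ne \fm$, so the kernel of $\Sym^m I \to I^m$ is supported only at $\fm$ and is thus a finite-length submodule of $\Sym^m I$, which forces it to vanish. Hence $\Sym^m I \cong I^m$, and $\mathbf{C}_\bullet$ is its graded minimal free resolution.

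The main obstacle is the local analysis in the second paragraph: verifying that after localizing and changing bases the defining ideal $C_\fp$ of $\Sym I_\fp$ genuinely is generated by a regular sequence of the expected length $\rk F$, so that the $t$-degree-$m$ strand of its Koszul complex is exact and computes $\Sym^m I_\fp = I_\fp^m$. This is exactly where the local-complete-intersection hypothesis is used (to control the local presentations), while the numerical hypothesis $\min\{\rk G - 1, m\} \le n$ — together with the truncation of $\mathbf{C}_\bullet$ at $\bigwedge^{\rk F} F$ — is what keeps the length of the complex within the range where Lemma~\ref{lem:finite homology} and the depth estimate both apply.
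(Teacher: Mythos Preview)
Your proof is correct and the exactness argument (localizing $\mathbf C_\bullet$ away from $\fm$, reducing to the Koszul complex on a regular sequence in $\Sym G_\fp$, and then invoking Lemma~\ref{lem:finite homology}) is exactly the paper's approach; you are also more explicit than the paper about why the length of $\mathbf C_\bullet$ is $\min\{m,\rk G-1\}\le n$, which is needed to cover the case $\rk G-1\le n<m$.

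The one genuine difference is how you establish $\Sym^m I\cong I^m$. The paper quotes a black box, \cite[Theorem~5.1]{Tc}, checking the numerical condition $\mu(I_\fp)\le\depth R_\fp$ for the relevant primes (and treating $\fp=\fm$ separately). You instead bootstrap from the exactness you just proved: the resolution gives $\depth(\Sym^m I)\ge n+1-p\ge 1$, so $\Sym^m I$ has no nonzero finite-length submodule, while the local computations already show the kernel of $\Sym^m I\twoheadrightarrow I^m$ is supported only at $\fm$. This is a cleaner, self-contained argument and avoids the external citation; the paper's route, on the other hand, makes visible the precise local numerical criterion (later recorded in the remark following the corollaries) and thus points toward how the local-complete-intersection hypothesis can be weakened.
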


\begin{proof}
In view of our Lemma \ref{lem:EN-complex}, it is sufficient to verify that the complex
therein is acyclic and that the canonical surjection $\Sym^m I\to I^m$ is in fact an
isomorphism  $\Sym^m I\cong I^m$.  We continue with the notation introduced in the
proof of Lemma \ref{lem:EN-complex}. 

Localizing the short exact sequence $0\to F\to G \to I\to 0$ at $\fp\in V(I)$ yields
the direct sum of a minimal free resolution for the height two complete intersection
 $I_\fp=(f,g)$ and an isomorphism $0\to R_\fp^{s-1}\to R_\fp^{s-1}\to 0$. This gives
$C_{\fp R[t_1,\ldots, t_s]}=(ft_1-gt_2, t_3,\ldots,t_s)$, thus $C$ is a complete
intersection in $R[t_1,\ldots t_s]$. Similarly, when localizing at  primes $\fp$
not containing $I$,  $\mathbf{K}_\bullet$ becomes the Koszul complex on the
variables $t_1,\ldots, t_s$ and thus $\mathbf{K}_\bullet$ and all of its graded strands
are exact complexes when localized at $\fp\neq \fm$. Therefore the homology of
the graded strands of $\mathbf{K}_\bullet$ has finite length. Applying Lemma
\ref{lem:finite homology}, we  conclude that the complex in Lemma \ref{lem:EN-complex}
is exact when $m\le n$.

Furthermore, by \cite[Theorem 5.1]{Tc}, the isomorphism $\Sym^m I\cong I^m$ holds
true if $\mu(I_\fp)\leq \depth R_\fp$  for all prime ideals $\fp$ containing $I$
such that  $\depth R_\fp \leq \min \{ \mu(I), \ m\}$.   For  $\fp\in V(I)$, using
the fact that $I$ is locally a complete intersection, we have
$\mu(I_\fp)=\h(I_\fp)=\depth I_\fp\leq \depth R_\fp$. Next we analyze the possibility
that $\fp=\fm$ is among the primes that satisfy $\depth R_\fp \leq \min \{ \mu(I), \ m\}$.
This occurs when $n+1=\depth R_\fm \leq \min \{ \mu(I), \ m\}=\min \{ \rk G, \ m\}$.
Since by hypothesis we have $\min \{ \rk G - 1, \ m\}\leq n$,  it must be the case
that $\mu(I)=\rk G=n+1\leq m$. But in this case, $\mu(I_\fm)=\rk G=n+1=\depth R_\fm$,
thus the desired conclusion that  $\Sym^m I\cong I^m$ follows.
 \end{proof}

 \begin{remark}\label{altapproach}
 The proof of \cite[Theorem 5.1]{Tc} shows that the locally complete intersection
hypothesis in Theorem \ref{thm:EN exact} can be weakened to  $\mu(I_\fp)\leq \depth R_\fp$
for all primes containing $I$ and such that $\depth R_\fp \leq \min \{ \mu(I), \ m\}$.
\end{remark}

\begin{remark}\label{HSVremark}
Previously known approaches to obtaining Theorem  \ref{thm:EN exact}:

(i) Lemma \ref{lem:EN-complex} and 
Theorem \ref{thm:EN exact} could also be obtained by applying the
results of \cite{HSV}.  In particular, the complex of Lemma \ref{lem:EN-complex} is the
{\em approximation complex} of \cite{HSV} constructed from the minimal free resolution
of a perfect ideal of codimension two. We prefer to give a more explicit description of this complex for future use in section \ref{sect:Romer}.

(ii) In \cite[Theorem 5.4]{ABW}, the resolutions of powers of an ideal that
satisfies $\h I_j (\ffi) \ge \mu(I) + 1 - j$ are given. This provides an alternate
proof of our Theorem \ref{thm:EN exact}.
\end{remark}

It is of interest to record here
the minimal free resolution that we obtain in the special cases of
$m=2$ and $m=3$ for ACM curves in $\mathbb P^3$. This will be useful in section \ref{sec:applications}.

\begin{corollary}
\label{cor:res powers}
Let $X$ be an arithmetically Cohen-Macaulay curve in $\mathbb P^3$
that is locally a complete intersection, and let
$
0 \to F \stackrel{\ffi}{\longrightarrow} G \to I_X \to 0
$
be a graded minimal free resolution.
Then one has:
\begin{itemize}
\item[(a)] The graded minimal free resolution of $I_X^2$ is
\[
0 \to \bigwedge^2  F    \rightarrow  F \otimes  G \rightarrow {\Sym}^2  G
\rightarrow I_X^2 \rightarrow 0.
\]

\item[(b)]The graded minimal free resolution of $I_X^3$ is
\[
0 \to \bigwedge^3  F \to \bigwedge^2  F \otimes G \rightarrow
 F \otimes {\Sym}^{2}  G \rightarrow {\Sym}^3  G \rightarrow I_X^3 \rightarrow 0.
\]
\end{itemize}
\end{corollary}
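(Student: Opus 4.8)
The plan is to obtain both parts as immediate specializations of the results established earlier in this section, so there is little to do beyond bookkeeping. First I would note that an arithmetically Cohen-Macaulay curve $X \subset \mathbb{P}^3$ has $R/I_X$ Cohen-Macaulay of dimension two, hence $I_X$ is a homogeneous perfect ideal of codimension two; this places $I_X$ in the hypotheses of both the corollary of \cite{SV} recalled above and of Theorem~\ref{thm:EN exact}, with $n = 3$.

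For part~(a), since $X$ is by assumption a generic complete intersection, $I_X$ is a perfect generic complete intersection ideal of codimension two, so the corollary of \cite{SV} applies directly and gives the minimal free resolution $0 \to \bigwedge^2 F \to F \otimes \Sym^1 G \to \Sym^2 G \to I_X^2 \to 0$; since $\Sym^1 G = G$, this is exactly the displayed complex. For part~(b), I would invoke Theorem~\ref{thm:EN exact} with $m = 3$. Its extra hypotheses hold: $I_X$ is now assumed to be a local complete intersection, and the numerical condition $\min\{\rk G - 1,\, m\} \le n$ reads $\min\{\rk G - 1,\, 3\} \le 3$, which is automatic. Theorem~\ref{thm:EN exact} then yields $\Sym^3 I_X \cong I_X^3$ together with the statement that the complex of Lemma~\ref{lem:EN-complex} in $t$-degree $m = 3$ is a graded minimal free resolution of $I_X^3$; writing that complex out for $m = 3$ and using $\Sym^1 G = G$ produces precisely $0 \to \bigwedge^3 F \to \bigwedge^2 F \otimes G \to F \otimes \Sym^2 G \to \Sym^3 G \to I_X^3 \to 0$.

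There is no real obstacle here: the only points to check are that the numerical hypothesis of Theorem~\ref{thm:EN exact} is automatically satisfied whenever $m \le n = 3$, and that the abstract complexes of Lemma~\ref{lem:EN-complex} take the stated explicit form. Minimality of both resolutions is inherited from the minimality of the presentation $F \stackrel{\ffi}{\longrightarrow} G$ via the last sentence of Lemma~\ref{lem:EN-complex}.
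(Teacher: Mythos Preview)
Your proposal is correct and matches the paper's intent exactly: the paper states this corollary without proof, as a direct specialization of the preceding \cite{SV}-corollary for part~(a) and of Theorem~\ref{thm:EN exact} with $m=3$, $n=3$ for part~(b). Your bookkeeping is accurate, including the automatic verification of the numerical hypothesis $\min\{\rk G - 1,\, 3\} \le 3$.
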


\begin{remark} \label{connection}
(i)
The above result applies in particular to any smooth space curve that is
arithmetically Cohen-Macaulay.

(ii) Note that any arithmetically Cohen-Macaulay union of lines in
$\mathbb{P}^3$
such that no three lines meet in a point is locally a complete intersection.
Again, the above result applies to configurations of this type.
 \end{remark}


\section{Remarks on the hypotheses}\label{sec:remarks}

In this section we comment on the importance of the various hypotheses in our
results in Section \ref{sec:res},  especially Theorem \ref{thm:gen all powers equal},
and we give examples to indicate various ways that they might be weakened.

\subsection{The ACM hypothesis}
The assumption that $X$ is arithmetically Cohen-Macaulay is essential in
Theorem \ref{thm:gen all powers equal}. If it is dropped, then it might happen that
all symbolic and ordinary powers
of $I$ are equal but $I$ has more than $n$ generators.
This is illustrated by Example \ref{ex:5 lines}.

\begin{example}\label{ex:5 lines}
Let $X$ be the union of $5$ lines in $\P^3$ defined by 5 general points in $\P^1\times\P^1$
(see Section 4 for more on points in $\P^1 \times \P^1$).  This configuration has
codimension two and is locally a complete intersection, but is not ACM.
However, from \cite[Theorem 3.1.4]{GHVT2}, $I=I_X$ has $6$ minimal generators and $I^{(m)}=I^m$ for all $m\geq 1$.
A similar phenomenon  holds for $s= 2$, or $3$ general points in
$\mathbb P^1 \times \mathbb P^1.$
The case of $s=2$  general points (i.e., two skew lines in $\mathbb P^3$ or even in
$\mathbb P^n$) is also discussed in \cite[Remark~4.2]{GHVT3}. Note that
all the cases of this example are non-ACM cases.
\end{example}

It is interesting to notice that for $5$ general lines in $\P^3$ the picture
is quite different, but still shows that the ACM hypothesis is necessary
for the final statement of Theorem \ref{thm:gen all powers equal}.

\begin{example}\label{ex:5 general lines}
Let $X$ be the union of $5$ general lines $L_1,\ldots,L_5$ in $\P^3$.   Again,
$X$ is not ACM, but ${\rm codim}(X) = 2$ and $X$ is locally a complete intersection.
Then for $I=I_X$ we have
$$I^{(2)}\neq I^2.$$
Indeed, for any three mutually distinct indices $\left\{i,j,k\right\}\subset\left\{1,2,3,4,5\right\}$
with $i<j<k$ let $Q_{ijk}$ be the quadric containing the lines $L_i$, $L_j$ and $L_k$.
Altogether there are $10$ such quadrics. The ideal $I$ is then generated by the following
products of the quadrics
$$F_1=Q_{123}Q_{145},\; F_2=Q_{145}Q_{235},\; F_3=Q_{235}Q_{124},\; F_4=Q_{124}Q_{345},\; F_5=Q_{345}Q_{123},$$
$$F_6=Q_{125}Q_{234},\; F_7=Q_{234}Q_{135},\; F_8=Q_{135}Q_{124},\; F_9=Q_{134}Q_{245},\; F_{10}=Q_{345}Q_{125}.$$
These generators not only vanish along all lines but they vanish along one of the lines
to order $2$. Since there is a cubic vanishing along $4$ general lines in $\P^3$, this
gives rise to $10$ septics vanishing to order $2$ along all five lines. Hence there
are elements of degree $7$ in $I^{(2)}$ but the initial degree of $I^2$ is $8$.
This example shows that the final statement of Theorem\ref{thm:gen all powers equal} also
requires the ACM hypothesis.
\end{example}

Nevertheless, the  condition that $X$ be ACM  in Theorem ~\ref{thm:gen all powers equal}
seems not to be essential if we allow the codimension to go up. Indeed, it is  of interest
to seek results about symbolic powers for non-ACM subschemes, possibly of higher
codimension. We first give a simple such result.

\begin{theorem}\label{thm: complete intersections codim4}
Let $C = C_1 \cup C_2$ be a disjoint union of two complete
intersections of dimension $r$ in $\mathbb P^{2r+1}$. Then $I_C^m = I_C^{(m)}$ for all positive integers $m$.
\end{theorem}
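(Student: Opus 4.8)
The plan is to reduce the statement to the classical fact that powers of a single complete intersection are symbolic powers (Zariski--Samuel, \cite[Appendix 6, Lemma 5]{ZS}), by exploiting that $C_1$ and $C_2$ are disjoint and each has codimension $r+1$ inside $\mathbb P^{2r+1}$. Write $J_i = I_{C_i}$, so that $I_C = J_1 \cap J_2$. Since $C_1$ and $C_2$ are disjoint, $J_1 + J_2$ is $\fm$-primary, and the associated primes of $I_C$ are exactly the minimal primes of $J_1$ together with the minimal primes of $J_2$; in particular $\operatorname{Ass}(I_C)$ splits into two groups, those containing $J_1$ and those containing $J_2$, and no prime other than $\fm$ contains both. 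Consequently, localizing at any $\fp \in \operatorname{Ass}(I_C)$, exactly one of $J_1, J_2$ survives: $(I_C)_\fp = (J_1)_\fp$ or $(I_C)_\fp = (J_2)_\fp$, and this is a complete intersection ideal in $R_\fp$.

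The main step is then to compute $I_C^{(m)}$ from the definition. We have
\[
I_C^{(m)} = \bigcap_{\fp \in \operatorname{Ass}(I_C)} (I_C^m R_\fp \cap R).
\]
Group the primes according to whether they lie over $C_1$ or over $C_2$. For $\fp$ lying over $C_1$, $(I_C)_\fp = (J_1)_\fp$, hence $I_C^m R_\fp = J_1^m R_\fp$, and intersecting over all such $\fp$ recovers $J_1^{(m)}$ (here one uses that the minimal primes of $J_1$ are precisely the associated primes of $I_C$ lying over $C_1$, since $C_1$ is a complete intersection and therefore unmixed); similarly the $C_2$-primes contribute $J_2^{(m)}$. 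Therefore $I_C^{(m)} = J_1^{(m)} \cap J_2^{(m)}$. By Zariski--Samuel, $J_i^{(m)} = J_i^m$ for each $i$, so $I_C^{(m)} = J_1^m \cap J_2^m$. It remains to identify $J_1^m \cap J_2^m$ with $I_C^m = (J_1 \cap J_2)^m$. Since $J_1 + J_2$ is $\fm$-primary, for large $k$ we have $\fm^k \subseteq J_1 + J_2$, which forces $J_1$ and $J_2$ to be comaximal "up to saturation"; more usefully, the disjointness gives that $J_1^m$ and $J_2^m$ have the property that $V(J_1^m) \cap V(J_2^m) \subseteq \{\fm\}$, so $J_1^m \cap J_2^m$ and $J_1^m J_2^m$ agree away from $\fm$, and both equal $(J_1 \cap J_2)^m$ away from $\fm$. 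To upgrade this to an equality of ideals (not just up to $\fm$-primary components) I would argue that $I_C^m = (J_1 \cap J_2)^m$ is itself saturated: its associated primes are among the primes lying over $C_1$ or $C_2$ (one can see this from a primary decomposition of $(J_1 \cap J_2)^m$ obtained by intersecting primary decompositions of $J_1^m$ and $J_2^m$, using disjointness to rule out an embedded $\fm$-primary component), so no saturation step changes it, and the same holds for $J_1^m \cap J_2^m$. Two saturated ideals that agree at every punctured-spectrum prime are equal, giving $I_C^m = J_1^m \cap J_2^m = I_C^{(m)}$.

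The step I expect to be the real obstacle is the last one: showing that $(J_1 \cap J_2)^m = J_1^m \cap J_2^m$ and that this ideal is saturated. The containment $(J_1 \cap J_2)^m \subseteq J_1^m \cap J_2^m$ is immediate, but the reverse is not formal for intersections of powers; the cleanest route is probably to localize at each associated prime (where one of the $J_i$ becomes the unit ideal, so the identity is trivial) and separately check there is no embedded $\fm$-primary component, the latter following because $\operatorname{depth} R/(J_1\cap J_2)^m > 0$ — which one can get, for instance, by noting $R/(J_1 \cap J_2)$ has positive depth (disjoint union of two complete intersections of positive dimension, $r \ge 1$, so $\dim R/I_C = r > 0$ and $R/I_C$ is... here one must be slightly careful, as $R/I_C$ need not be Cohen--Macaulay, but it is unmixed of dimension $r\ge 1$ with no $\fm$-primary component, which already gives $\operatorname{depth} > 0$) and then controlling the depth of the power via the same local analysis. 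An alternative, possibly smoother, is to invoke Lemma \ref{LCI}: $C$ is equidimensional and, being a disjoint union of complete intersections, is a local complete intersection, so $I_C^{(m)} = (I_C^m)^{\operatorname{sat}}$, and one then only needs $I_C^m$ to be saturated, i.e. $\operatorname{depth} R/I_C^m > 0$, which is exactly the local computation above.
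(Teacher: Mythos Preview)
Your overall strategy matches the paper's: write $I_C = J_1 \cap J_2$, use Zariski--Samuel to get $I_C^{(m)} = J_1^{(m)} \cap J_2^{(m)} = J_1^m \cap J_2^m$, and then identify this with $I_C^m = (J_1 \cap J_2)^m$. The gap is precisely where you flag it: the identity $(J_1 \cap J_2)^m = J_1^m \cap J_2^m$ (equivalently, that $I_C^m$ is saturated) is never established. Your first suggestion---reading off a primary decomposition of $(J_1 \cap J_2)^m$ from primary decompositions of $J_1^m$ and $J_2^m$---is circular, since $J_1^m \cap J_2^m$ is not a priori equal to $(J_1 \cap J_2)^m$. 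Your second suggestion---that $\depth R/I_C > 0$ together with ``the same local analysis'' forces $\depth R/I_C^m > 0$---is unjustified: knowing that the localizations $(I_C^m)_\fp$ behave well for $\fp \neq \fm$ says nothing about whether $\fm$ itself is an associated prime of $R/I_C^m$, and positive depth of $R/I$ does not in general transfer to $R/I^m$. Invoking Lemma~\ref{LCI} only restates the problem, since it reduces $I_C^{(m)} = I_C^m$ exactly to $I_C^m$ being saturated.

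The paper closes this gap with a genuinely nontrivial input: Serre's theorem \cite[pp.~142--143]{serre} that for two disjoint ACM subschemes of dimension $r$ in $\P^{2r+1}$ the intersection of their ideals equals the product. Since $R/J_i^m$ is again Cohen--Macaulay (power of a complete intersection) with the same support as $R/J_i$, Serre applies both to $(J_1, J_2)$ and to $(J_1^m, J_2^m)$, giving
\[
(J_1 \cap J_2)^m = (J_1 J_2)^m = J_1^m\, J_2^m = J_1^m \cap J_2^m.
\]
This is the missing step; the dimension hypothesis $\dim C_i = r$ in $\P^{2r+1}$ is used exactly here (it makes the intersection ``proper'' in Serre's sense), and without it the argument does not go through.
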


\begin{proof}
For $i=1,2$, we have that $I_{C_i}^m = I_{C_i}^{(m)}$ for all $m$ since $I_{C_i}$ is generated by
a regular sequence (see \cite[Lemma 5, Appendix 6]{ZS}).  Thus
\[
I_C^m = (I_{C_1}I_{C_2})^m = I_{C_1}^m \cdot I_{C_2}^m = I_{C_1}^m
\cap I_{C_2}^m =  I_{C_1}^{(m)} \cap I_{C_2}^{(m)} = I_C^{(m)}.
\]
The first and third equalities are true because for disjoint ACM subschemes
of dimension $r$ in $\mathbb P^{2r+1}$, the intersection of the
ideals is equal to the product thanks to  a special case of Th\'eor\`eme 4 and
 the subsequent Corollaire in \cite[pp. 142--143]{serre}. \end{proof}

 The following example shows that even the assumption that $C$ be a disjoint union, in Theorem~\ref{thm: complete intersections codim4},  is not always
 needed.

\begin{example}\label{remark:two planes in P^4}
Consider the union, $C$, of two planes in
$\mathbb P^4$ meeting at a point $P$. At $P$, $C$~not only fails
to be locally a complete intersection, but in fact it fails to be
locally Cohen-Macaulay, since $C$ is a cone over two skew lines in
$\mathbb P^3$ with vertex at $P$. Yet a similar argument as given in the proof of Theorem \ref{thm: complete intersections codim4} shows that $I_C^m =I_C^{(m)}$ for all $m \geq 1$.
In particular, it is worth noting that  the powers of $I_C$ do not pick up an embedded point at~$P$.
\end{example}

\subsection{The hypothesis on low codimension}
As noted in Remark \ref{quest:licci} It is conceivable that the equivalence of the three statements (a), (b) and (c)
in  Theorem ~\ref{thm:gen all powers equal}  may hold  in higher codimension as long as the ideal is locally a complete intersection and linked to a complete intersection. We offer some evidence towards this conjecture below.
In the following example we give ``essentially" the same licci ideal,
viewed in $\mathbb P^3$ and in $\mathbb P^4$. In both cases the ideal
is locally a complete intersection (in fact smooth) and is ACM.
It has four minimal generators,  so the condition that $I$ has at most $n$ minimal generators
fails in the case of $\mathbb P^3$ but is satisfied in the case of $\mathbb P^4$.
This example was obtained using CoCoA~\cite{cocoa}.

\begin{example} \label{ex:P3P4}
Consider a sufficiently general complete intersection of type (1,1,2) in $\mathbb P^3$
and (separately) also in $\mathbb P^4$. In each case, link this ideal using a sufficiently
general complete intersection of type $(2,2,2)$. Both in $\mathbb P^3$ and in $\mathbb P^4$,
the residual will be ACM of codimension 3, and an easy mapping cone argument shows that
the residual has 4 minimal generators and Hilbert function $(1,4,6,6,\dots)$.  In $\mathbb P^3$ the
residual is a set of 6 distinct points, and in $\mathbb P^4$ the residual is a smooth
curve of degree 6 and genus 2, so both are locally complete intersections. Indexing
by the ambient space, we denote these ideals by $I_3$ and $I_4$ respectively. One can
check by hand or with a computer algebra program that the Betti diagram of both $I_3$ and $I_4$ is
\[
\begin{array}{c|cccccccc}
& 0  &  1 &   2 &   3 \\ \hline
 0: &    1 &   - &   -  &   - \\
 1:  &   - &   4  &  2 &   - \\
 2:  &   -  &  -  &  3 &   2 \\ \hline
\hbox{Tot:}  &  1  &  4 &   5 &   2
\end{array}
\]
Then one can check with a computer algebra program that the
Betti diagram of both $I_3^2$ and $I_4^2$ is 
\[
\begin{array}{c|cccccccccc}
    &    0 &   1 &   2  &  3  &  4 \\ \hline
 0: &    1 &   - &   - &   - &   - \\
 1: &    - &   -  &  -  &  - &   - \\
 2:  &   -  &  -  &  -  &  -  &  - \\
 3: &    - &  10  &  8  &  1 &    - \\
 4: &    - &   -  &  9 &   8 &   1 \\ \hline
\hbox{Tot:} &   1 &  10 &  17  &  9 &   1
\end{array}
\]
By the Auslander-Buchsbaum formula, this means that $I_3^2$ is not saturated,
hence $I_3^2 \neq I_3^{(2)}$. On the other hand, it means that $I_4^2$ is saturated;
then since $I_4$ is locally a complete intersection, we get $I_4^2 = I_4^{(2)}$
(although $I_4^2$ is not ACM). We have verified on CoCoA that in
fact $I_4^k = I_4^{(k)}$ for $1 \leq k \leq 7$.
\end{example}

An additional piece of evidence that the hypothesis on the codimension might be weakened
is provided by ideals of scrolls $\P^1\times\P^n\hookrightarrow\P^{2n+1}$. Note that the ideals defining these scrolls are not even licci, as can be shown by applying the criterion in \cite[Theorem 2.8]{U2}.

\begin{example}[Scrolls]\label{ex:scrolls}
Let $X$ be the scroll $\P^1\times\P^n\hookrightarrow\P^{2n+1}$ embedded by the Segre embedding.
The ideal $J$ of $X$ is determined by the $2\times 2$ minors of a generic $2\times(n+1)$ matrix.
By \cite[Corollary 7.3]{DCEP} we have $J^{(m)}=J^m$ for all $m\geq 1$. Indeed, in the
notation of the just cited corollary, we have $J=I_2$, $n=2$ and $\ell=2$.

Notice that the ideal $J$ has $\binom{n+1}{2}$ minimal generators and the
corresponding scroll lies in $\mathbb P^{2n+1}$. The variety we get has codimension $n$.
When $n = 2$ or $n = 3$, it is true that the number of minimal generators is smaller
than the dimension of the projective space. The latter, in particular, gives further
evidence for Question \ref{Q1} below. But as soon as $n \geq 4$, we get an example
where the number of minimal generators is larger than the dimension of the projective
space (violating part (c) of Theorem ~\ref{thm:gen all powers equal}) but
nevertheless the statements of (a) and (b) are true.
\end{example}

With the above examples and comments in mind, we pose the following questions:

\begin{question} \label{Q1}
Let $X \subseteq \P^n$ be an ACM subscheme that is locally a complete intersection.
\begin{enumerate}
\item[$(i)$]  If ${\rm codim}(X) = 3$, are conditions (a), (b) and (c) of
Theorem \ref{thm:gen all powers equal} still equivalent?
\item[$(ii)$] If ${\rm codim}(X) > 3$, are conditions (a) and  (b) of
Theorem \ref{thm:gen all powers equal} still equivalent?
\item[$(iii)$] If $I_X$ is linked to a complete intersection, are conditions (a),(b) and (c) of
Theorem \ref{thm:gen all powers equal} still equivalent?
\end{enumerate}
\end{question}

\subsection{The hypothesis of being locally a complete intersection}
We now address the assumption that $X$ is locally a complete intersection.
For many of the results in this paper, Lemma \ref{LCI} has been
important. It says that the property that a reduced subscheme $X
\subset \mathbb P^n$ is locally a complete intersection implies that
its powers do not pick up non-irrelevant embedded components, so that the failure
of a power to be a symbolic power comes only from the failure to
be saturated. It is natural to ask if the converse holds; that is,
one might ask if the following statement is true: If $X$ is not 
locally a complete intersection, then $I_X^m$ defines a scheme with
embedded components.

We have seen in Example \ref{remark:two planes in P^4} that this is not the case.
On the other hand, it is not hard to verify that if $X$ consists of three lines in
$\mathbb P^3$ meeting at a point,
then $X$ is ACM but $I_X^2$ {\em does} have an embedded point;
so even though $I_X^2$ is saturated, it is not equal to $I_X^{(2)}$.
We make the following conjecture, which is also based on other computer experiments.
Example \ref{remark:two planes in P^4} shows that it is not true without the assumption
that $X \subset \mathbb P^3$.

\begin{conjecture}\label{conj:not LCI}
Let $X\subset\P^3$ be a subvariety (reduced and unmixed) of codimension~2. Assume that there is a point $P \in X$ such that the localization of $I_X$ at $P$ is not a complete intersection. Then for any  integer $m \geq 2$, the saturation of $I_X^m$ has an embedded component at $P$.  In particular,
\[
I_X^{(m)} \neq I_X^m~~\mbox{for all $m\geq 2$.}
\]
\end{conjecture}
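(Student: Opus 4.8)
First I would reduce to a purely local statement at $P$. Write $(A,\mathfrak n):=R_{\fp_P}$, a regular local ring of dimension three, and $J:=I_XA\subseteq A$. Since $\fp_P\neq\fm$, saturation is invisible after localizing at $\fp_P$ and localization commutes with powers, so $(R/(I_X^m)^{\sat})_{\fp_P}\cong A/J^m$; as $\fp_P$ has height three while every minimal prime of $I_X^m$ has height two, the condition ``$(I_X^m)^{\sat}$ has an embedded component at $P$'' is equivalent to $\mathfrak n\in\Ass(A/J^m)$, i.e.\ to $\depth_A(A/J^m)=0$, i.e.\ by the Auslander--Buchsbaum formula to $\mathrm{pd}_A(A/J^m)=3$. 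Because $X$ is reduced of pure dimension one, $A/J$ is a reduced one-dimensional local ring, hence Cohen--Macaulay, so $J$ is a grade-two perfect ideal; localizing $A/J$ at a minimal prime of $J$ yields a field, so $J$ is in addition a generic complete intersection; and the hypothesis that $I_X$ is not a complete intersection at $P$ says exactly that $t:=\mu(J)\ge 3$. Granting $\depth_A(A/J^m)=0$ for all $m\ge 2$, the displayed inequality is immediate: $I_X^{(m)}=\bigcap_i(I_X^mR_{\fp_i}\cap R)$, the intersection over the minimal primes $\fp_i$ of $I_X$, is saturated and has no embedded primes, hence properly contains $(I_X^m)^{\sat}$, which in turn contains $I_X^m$.

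Next I would dispose of the cases accessible to the machinery of Section~\ref{sec:res}. Fix a minimal Hilbert--Burch resolution $0\to F\stackrel{\ffi}{\longrightarrow}G\to J\to 0$ over $A$, so $F\cong A^{t-1}$ and $G\cong A^t$. If $m=2$: since $J$ is a generic complete intersection, $\Sym^2 J\cong J^2$ by \cite[Theorem 5.1]{Tc} (the only relevant primes are those minimal over $J$, and there $\mu(J_{\mathfrak q})=2$), while the localization argument in the proof of the corollary of \cite{SV} (localize; apply the local version of Lemma \ref{lem:finite homology}) shows that the length-two complex of Lemma \ref{lem:EN-complex} is the minimal free resolution of $J^2$; as $t\ge 3$, its last term $\bigwedge^2F\cong A^{\binom{t-1}{2}}$ is nonzero, so $\mathrm{pd}_A(A/J^2)=3$. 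If $t=3$, so that $X$ is an almost complete intersection at $P$: then $\mu(J)=3\le\depth A$, so \cite[Theorem 5.1]{Tc} gives $\Sym^mJ\cong J^m$ for every $m$, and the complex of Lemma \ref{lem:EN-complex}, of length $\min\{m,2\}\le 3$, is again a minimal free resolution of $J^m$ with nonzero last term $\bigwedge^2F\cong A$; hence $\mathrm{pd}_A(A/J^m)=3$. This proves the conjecture whenever $m=2$ or $t=3$, recovering in particular the configurations behind Peterson's \cite[Corollary 2.7]{P}.

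What remains is $m\ge 3$ together with $t\ge 4$, and this is where the argument must be genuinely new. Now the Eagon--Northcott-type complex of Lemma \ref{lem:EN-complex} has length $\min\{m,t-1\}\ge 3$, so it cannot be a free resolution of $J^m$ over the three-dimensional ring $A$ (that would force $\mathrm{pd}_A(A/J^m)\ge 4$); equivalently $J$ fails to be of linear type at $P$, $\Sym^mJ\neq J^m$, and the clean homological input disappears. I would attempt one of two routes. The first is to transport the argument of Huneke--Ulrich \cite[Theorem 2.8]{HU} --- which is precisely this statement for the standard-graded ring $k[x_0,x_1,x_2]$ and points in $\mathbb P^2$ --- to an arbitrary three-dimensional regular local ring: link $J$ by a complete intersection $(f,g)\subseteq J$ to an ideal with fewer minimal generators and induct on the deviation $t-2$, replacing the general linear form of the graded proof by a superficial element $\ell\in\mathfrak n$ and Hilbert functions by Hilbert--Samuel multiplicities, so that modding out $\ell$ lands one in a two-dimensional regular local ring in which $J$ is $\mathfrak n$-primary and the colengths of the ideals $J^m+(\ell)$ can be computed and compared. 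The second is to bootstrap from the case $m=2$: the discrepancy $K_m:=\ker(\Sym^mJ\twoheadrightarrow J^m)$ is supported only at $\mathfrak n$ --- since $J$ is of linear type at every other prime of $A$ --- hence has finite length and, when nonzero, satisfies $\mathrm{pd}_AK_m=3$; one would push the exact sequence $0\to K_m\to\Sym^mJ\to J^m\to 0$ against what can be said about the minimal free resolution of $\Sym^mJ$ and about $\mathrm{gr}_J(A)$ for grade-two perfect ideals in order to force $\beta_2^A(J^m)\neq 0$. The hard part will be exactly this loss of control once $J$ is no longer of linear type at $P$: one would have to either reconstruct Huneke--Ulrich's delicate multiplicity estimates over a general three-dimensional regular local ring, or understand the finite-length module $K_m$ precisely enough to read off $\depth_A(A/J^m)$. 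A natural first test is the case of unions of lines through $P$, and more generally ideals for which $\mathrm{gr}_J(A)$ is transparent, using also the hypothesis that $P$ is the only non-lci point of $X$.
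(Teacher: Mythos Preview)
The statement you are attempting to prove is labeled Conjecture~\ref{conj:not LCI} in the paper, and the paper offers no proof: it is presented as an open conjecture, motivated by the example of three concurrent lines in $\mathbb{P}^3$ and by unspecified computer experiments, and is followed only by related examples (Examples~\ref{ex:lci 1}--\ref{ex:lci 3}) rather than by any argument. There is therefore no ``paper's own proof'' to compare against.

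Your proposal is not a complete proof either, and you are explicit about this. The reduction to the local statement $\depth_A(A/J^m)=0$ over the three-dimensional regular local ring $A=R_{\fp_P}$ is correct, and your handling of the two accessible regimes---$m=2$ via the generic complete intersection property and the Simis--Vasconcelos argument, and $\mu(J)=3$ via Tchernev's criterion---is sound: in each case the complex of Lemma~\ref{lem:EN-complex} has length two, is acyclic by the local analogue of Lemma~\ref{lem:finite homology}, and has nonzero last term $\bigwedge^2 F$, forcing $\mathrm{pd}_A(A/J^m)=3$. These partial results already go beyond anything the paper claims to establish.

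The remaining range $m\ge 3$ with $\mu(J)\ge 4$ is exactly where the conjecture is open, and neither of your two proposed attacks is carried through. Transporting the Huneke--Ulrich linkage argument from the graded ring $k[x_0,x_1,x_2]$ to an arbitrary three-dimensional regular local ring is plausible but nontrivial, and the bootstrap via the finite-length kernel $K_m=\ker(\Sym^m J\twoheadrightarrow J^m)$ would require comparing $\beta_2^A(\Sym^m J)$ with $\beta_2^A(K_m)$ in a way that the paper's machinery does not supply. Until one of these routes is completed, the conjecture remains open in your write-up just as it does in the paper. One small slip in your final sentence: the conjecture does not assume that $P$ is the \emph{only} non-lci point of $X$, so invoking that as an available hypothesis grants yourself something you do not have.
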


The above conjecture has been proved in \cite[Theorem 4.4]{As} for irreducible varieties $X$.
We now give some examples related to this conjecture.

\begin{example}\label{ex:lci 1}
Recall that a fat points scheme in $\mathbb P^n$ is defined by an ideal of the form $J = I(P_1)^{m_1} \cap \cdots \cap I(P_s)^{m_s}$, where $\{P_1, \ldots, P_s\} \subseteq \mathbb P^n$ is a finite set of distinct points and $m_1, \ldots, m_s$ are non-negative integers. We denote the fat points scheme by $m_1P_1 + \cdots + m_sP_s$.  
 By Remark \ref{PowersEquality}(i), if $J$ is the ideal of a reduced set of points
in $\P^2$ (so $J$ is radical), then $J^{(m)}=J^m$ for all $m\geq 1$ (if and)
only if $J$ is a complete intersection.  However, the situation is more subtle
for fat points. There are examples of nonreduced ideals $J = I(P_1)^{m_1} \cap \cdots \cap I(P_s)^{m_s}$ of fat points all of whose powers are symbolic.  Since a nonradical fat points ideal is never locally a complete intersection, this shows that Conjecture \ref{conj:not LCI} is false without the assumption that $X$ is a variety,
where variety here means any reduced subscheme of $\P^n$ (not necessarily irreducible).

It is an interesting but open problem to classify those ideals $I$ of fat points in
$\P^2$ whose powers are all symbolic.
Of course, if $J=I^{(t)}$ where $I$ is a radical complete
intersection ideal of points in $\P^n$, then $J^{(m)}=I^{(tm)}=I^{tm}=(I^t)^m=J^m$,
so $J$ is a fat points ideal and all powers of $J$ are symbolic.
A sufficient condition for $J^{(m)}=J^m$ for all $m\geq 1$ to hold
for an ideal $J$ of fat points in $\P^2$ is given in \cite[Proposition 3.5]{HaHu13}:
if $J$ is of the form $J=I^{(t)}$ where $I$ is a radical ideal of $n$ points in $\P^2$, and if
$\alpha(J)\beta(J)-t^2n=0$, where $\alpha(J)$ is the least degree of a non-zero
form in $J$ and $\beta(J)$ is the least degree $d$ such that the base locus of
the linear system $J_d$ is 0-dimensional, then all powers of $J$ are symbolic.
So for example, if $I$ is either the ideal of five general points in $\P^2$ or
the ideal of a star configuration in $\P^2$, then $J^{(m)}=J^m$ holds for all
$m\geq 1$ for $J=I^{(2)}$ (see \cite[Corollary 3.9 and Lemma 3.11]{HaHu13}; see also \cite{GHM}).
In neither case is $I$ a complete intersection; in particular $I^2\neq I^{(2)}$ for
these examples.
\end{example}

\begin{example}\label{ex:lci 2}
We now recall examples of radical ideals $I(m)$ of  points in $\P^2$ given in \cite{NS}.
The results of \cite{NS} show that the powers of $I(m)^{(m)}$ are all symbolic
but $I(m)^{(m)}$ is not a power of a complete intersection and (as long as $m>3$)
the criterion given in \cite[Proposition 3.5]{HaHu13} does not apply.
(For example, for $m=4$, we have $\alpha(I(4))=16$ and $\beta(I(4))=20$, but $m^2n=16(19)$.)
Fix some integer $m \ge 3$ and consider the ideal
\[
I(m)=(x(y^m-z^m),y(z^m-x^m),z(x^m-y^m))\subset R= k[x,y,z].
\]
This ideal is the homogeneous ideal of  a set of $m^2 + 3$ points, which has been dubbed a
Fermat point configuration. By Theorem \ref{thm:gen all powers equal} (c),
we know $I(m)^{(2)} \neq I(m)^2$.  However,
for each
positive integer $j$ one has by \cite[Proposition 4.1]{NS}
\[
I(m)^{(m j)} = (I(m)^{(m)})^j.
\]
\end{example}

\begin{example}\label{ex:lci 3}
The examples given above of ideals $J = I(P_1)^{m_1} \cap \cdots \cap I(P_s)^{m_s}$ of fat points whose powers are all symbolic
all have the property that $\operatorname{gcd}(m_1,\ldots,m_s)\neq 1$, but this is not essential.
Examples with $\operatorname{gcd}(m_1,\ldots,m_s)=1$ are given in \cite[Example 5.1]{BH}.
In particular, let $J$ be the ideal of $Z = (d-1)P_1 + P_2 + \cdots + P_{2d}$
where $P_1, \ldots, P_{2d}$ are general points in
$\P^2$ and $d>2$. It is shown in \cite{BH} that $J^m = J^{(m)}$ and noted there that there is no
$m \geq 1$ such that $J^{(m)}$ is a power of any ideal which is prime, radical, or a complete intersection.
As noted in \cite{BH}, further examples can be obtained from this using the action of the Cremona group.
These examples, Example \ref{ex:lci 1} above, and the case $m=3$ of Example \ref{ex:lci 2} have the property that $\alpha(J)\beta(J)-\sum_im_i^2=0$, where $\alpha(J)$ and $\beta(J)$ are as defined in Example \ref{ex:lci 1}.  Thus, perhaps \cite[Proposition 3.5]{HaHu13} can be generalized
to ideals of fat points rather than just for certain powers of certain radical ideals.  However
Example \ref{ex:lci 2} shows this would still not cover all cases of ideals of fat points whose powers are all symbolic.
\end{example}


\section{Application 1:
Points in $\mathbb P^1 \times \mathbb P^1$}\label{sec:applications}

In this section we apply the main results of Section 2, namely
Theorem \ref{thm:gen all powers equal}, to ACM sets of points
in $\popo$.  In particular, we show how our new results
give new short proofs to results in \cite{GHVT,GVT2}.

We begin with a quick review
of the relevant definitions and notation.   For a more thorough
introduction to this topic, see \cite{GVBook}.
The polynomial ring $k[x_0,x_1,x_2,x_3]$ with the bigrading
given by $\deg x_0 = \deg x_1 = (1,0)$ and $\deg x_2 = \deg x_3 =
(0,1)$ is the coordinate ring of $\P^1 \times \P^1$.
A point $P = [a_0 : a_1] \times [b_0 : b_1]$
in $\mathbb{P}^1 \times \mathbb{P}^1$ has a bihomogeneous
ideal $I_P = (a_1 x_0 - a_0 x_1, b_1 x_2 - b_0 x_3)$.   A set of
points $X = \{P_1,\ldots,P_s\} \subseteq \P^1 \times \P^1$
is associated to the
bihomogeneous ideal $I_X = \bigcap_{P \in X} I_P$.  If we only
consider the standard grading of this ideal, then $I_X$
defines a union $X$ of lines in $\P^3$.  In order
to apply the results of the previous sections, we first require
the following lemma.

\begin{lemma}\label{rem:points in product as lines}
Let $X \subseteq \P^1 \times \P^1$ be any set of points.  Then
$I_X$ is locally a complete intersection.
\end{lemma}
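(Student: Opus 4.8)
The plan is to reduce the statement to a purely local question at each point $P \in X$ and check it directly using the very explicit form of $I_P$. Let $P = [a_0:a_1] \times [b_0:b_1] \in X$ and let $\fp$ be the homogeneous prime of $k[x_0,x_1,x_2,x_3]$ corresponding to the line $\{P\} \subseteq \P^3$; so $\fp = (a_1 x_0 - a_0 x_1, \ b_1 x_2 - b_0 x_3)$. Since $X$ is a finite set of points of $\P^1 \times \P^1$, i.e.\ a union of distinct lines in $\P^3$, the only primes of $\operatorname{Proj} R$ containing $I_X$ are exactly the $\fp$ ranging over $P \in X$, and each such $\fp$ has height $2$, matching $\codim(X) = 2$. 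So it suffices to show that for each $P$, the localization $(I_X)_{\fp}$ is generated by a regular sequence of length $2$.

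First I would observe that localizing at $\fp$ kills every component $I_{P'}$ with $P' \neq P$: if $P' \neq P$ then $I_{P'} \not\subseteq \fp$ (two distinct lines in $\P^3$ give distinct primes, neither contained in the other), so $(I_{P'})_{\fp} = R_{\fp}$, and therefore
\[
(I_X)_{\fp} = \Bigl( \bigcap_{P' \in X} I_{P'} \Bigr)_{\fp} = \bigcap_{P' \in X} (I_{P'})_{\fp} = (I_P)_{\fp},
\]
using that localization commutes with finite intersections. Thus $(I_X)_{\fp} = (I_P)_{\fp}$, and $I_P = (a_1 x_0 - a_0 x_1, \ b_1 x_2 - b_0 x_3)$ is visibly generated by two linear forms. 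Since these two linear forms involve disjoint sets of variables (one in $x_0, x_1$, the other in $x_2, x_3$), they form a regular sequence in $R$, hence also after localizing at $\fp$ (a regular sequence contained in $\fp$ stays a regular sequence in $R_\fp$). Therefore $(I_X)_{\fp}$ is a complete intersection of codimension $2 = \codim(X)$, which is exactly the definition of $X$ being a local complete intersection.

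I do not anticipate a serious obstacle here; the statement is essentially bookkeeping once one writes down $I_P$ explicitly. The one point that deserves a sentence of care is the claim that for distinct points $P, P'$ of $\P^1 \times \P^1$ one has $I_{P'} \not\subseteq \fp = I_P^{\sat}$ as primes of $\operatorname{Proj} R$ — equivalently that the corresponding lines in $\P^3$ are distinct — which follows because the Segre image of $\P^1 \times \P^1$ is a quadric in $\P^3$ and distinct points map to distinct lines of the two rulings; alternatively one checks directly that the four linear forms cannot all lie in a single height-two linear prime. A minor subtlety worth noting is that $X$ could a priori fail to be equidimensional only if some $I_P$ degenerated, but each $I_P$ is a genuine height-two complete intersection, so $I_X$ is unmixed of height $2$ and the set of relevant primes is exactly $\{\fp : P \in X\}$ as claimed.
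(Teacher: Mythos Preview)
Your argument has a genuine gap: you have only verified the condition at the \emph{minimal} primes of $I_X$, which shows that $X$ is a \emph{generic} complete intersection, not a local complete intersection. The paper's definition requires that $(I_X)_\fp$ be a complete intersection for \emph{every} prime $\fp \neq \fm$ with $I_X \subseteq \fp$, and when $X$ is a union of lines in $\P^3$ this includes all height-$3$ primes corresponding to closed points on those lines. Your sentence ``the only primes of $\operatorname{Proj} R$ containing $I_X$ are exactly the $\fp$ ranging over $P \in X$'' is simply false: every closed point of $\P^3$ lying on one of the lines gives a height-$3$ prime containing $I_X$.

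The case you are missing is precisely the interesting one. Two lines coming from points $P = A \times B$ and $P' = A \times B'$ with $B \neq B'$ share the first linear form $a_1 x_0 - a_0 x_1$, hence both lie in that plane and meet at a point $Q$ on the line $x_0 = x_1 = 0$. At the height-$3$ prime $\fq$ of $Q$, both $I_P$ and $I_{P'}$ are contained in $\fq$, so your localization argument no longer reduces $(I_X)_\fq$ to a single $(I_P)_\fq$; instead $(I_X)_\fq = (I_P \cap I_{P'} \cap \cdots)_\fq$ and one must argue further. (Compare the remark in the paper that three concurrent lines in $\P^3$ need not be a local complete intersection.) The paper handles this by observing geometrically that any collection of such lines meeting at a common point of $\P^3$ must in fact be coplanar --- they all lie in a single plane of one of the two pencils --- so locally at $Q$ the scheme $X$ is a plane curve, hence a complete intersection (plane curve $\cap$ plane). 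That geometric step, or an algebraic equivalent, is what your proof needs to add; the verification at the minimal primes is the easy part.
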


\begin{proof}
We will consider a point $P \in \mathbb{P}^1 \times \mathbb{P}^1$
as the line in $\mathbb{P}^3$ that is defined by the ideal
$I_P = (a_1 x_0 - a_0 x_1, b_1 x_2 - b_0 x_3)$.
We  now show that the union of lines in $\mathbb P^3$ coming from a union of
points in $\mathbb{P}^1 \times \mathbb{P}^1$ in this way is locally a
complete intersection. There is no problem at a smooth point, so we must
determine how two or more such lines can meet at a single point.
Let $C$ be a union of such lines.

The planes of the form $a_1 x_0 - a_0 x_1$ form the pencil of planes,
$\Lambda_1$, through the line $\lambda_1$ defined by $x_0 = x_1 = 0$.
Similarly, the planes of the form $b_1 x_2 - b_0 x_3$ form the pencil of
planes, $\Lambda_2$, through the line $\lambda_2$ defined by
$x_2 = x_3 = 0$. Notice that $\lambda_1$ and $\lambda_2$ are disjoint.
A point $P$ in $\mathbb P^1 \times \mathbb P^1$, then, corresponds to the
line of intersection of a plane in $\Lambda_1$ and a plane in $\Lambda_2$.
Given any point $A$ in $\mathbb P^3$ not on either $\lambda_1$ or $\lambda_2$,
there is a unique element of $\Lambda_1$ and a unique element of $\Lambda_2$
passing through $A$. Hence two lines of $C$ cannot meet at a point not on one
of the two lines, $\lambda_1$ or $\lambda_2$. Now assume that
$A \in \lambda_1$ (the case $A \in \lambda_2$ is identical). In order for
two or more lines of $C$ to meet at $A$, it must be that  the plane
$H_A \in \Lambda_2$ containing $A$ is fixed, while the plane in
$\Lambda_1$ is not. Hence lines meeting at $A$ all lie on $H_A$, and so are
coplanar. But any plane curve is a complete intersection, so the same holds
for any localization. Therefore, any set of points in
$\mathbb{P}^1 \times \mathbb{P}^1$ defines a union of lines in
$\mathbb P^3$ that is locally a complete intersection.
\end{proof}

First we give a short proof of  \cite[Theorem 1.1]{GHVT}.

\begin{theorem} \label{GHVT thm}
Let $X \subseteq \mathbb{P}^1 \times \mathbb{P}^1$ be an ACM set
of points.  Then $I_X^m = I_X^{(m)}$ for all $m \geq 1$ if and
only if $I_X^3 = I_X^{(3)}$.
\end{theorem}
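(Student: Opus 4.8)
The plan is to deduce Theorem~\ref{GHVT thm} directly from Corollary~\ref{thm:all powers equal 2} (equivalently Corollary~\ref{prop:compare lci}), now that Lemma~\ref{rem:points in product as lines} is available. The point is that an ACM set of points $X\subseteq\P^1\times\P^1$, viewed through the standard grading, defines a union of lines in $\P^3$, so $I_X$ is a saturated homogeneous ideal in $R=k[x_0,x_1,x_2,x_3]$ defining a subscheme of $\P^3$; thus $n=3$ here. First I would check the three hypotheses of Corollary~\ref{thm:all powers equal 2}: the codimension is two since a union of (finitely many) lines in $\P^3$ has dimension one; $X$ is ACM by assumption (this is exactly the hypothesis of the theorem, and it is equivalent to $R/I_X$ being Cohen-Macaulay); and $X$ is a local complete intersection by Lemma~\ref{rem:points in product as lines}. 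Hence all hypotheses are met with $n=3$.

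With $n=3$, Corollary~\ref{thm:all powers equal 2} says that conditions (a) $I_X^{(3)}=I_X^3$, (b) $I_X^{(m)}=I_X^m$ for all $m\ge 1$, and (c) $I_X$ has at most $3$ minimal generators are all equivalent. In particular (a)$\Leftrightarrow$(b), which is precisely the statement of Theorem~\ref{GHVT thm}. So the proof is essentially one line: \emph{apply Corollary~\ref{thm:all powers equal 2} with $n=3$}, after verifying the hypotheses as above. The ``furthermore'' clause of Corollary~\ref{thm:all powers equal 2} additionally gives the (already known, cf.\ Remark~\ref{PowersEquality}(iii)) fact that $I_X^{(2)}=I_X^2$ automatically, since $2<n=3$; one could remark on this but it is not needed for the statement.

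I do not expect any real obstacle: the substantive work has all been done, in Theorem~\ref{thm:EN exact} / Corollary~\ref{prop:compare lci} and in Lemma~\ref{rem:points in product as lines}. The only thing to be a little careful about is the shift of viewpoint --- one must pass from the bigraded ideal of points in $\P^1\times\P^1$ to the standard-graded ideal of a curve in $\P^3$, and note that ``ACM'' and the minimal number of generators are unaffected by this (the ideal $I_X$ is literally the same ideal of $R$, just regarded with a coarser grading, and all the relevant invariants --- depth, codimension, $\mu(I_X)$, saturation, symbolic powers --- depend only on the ring structure, not the grading). Once that is observed, the theorem follows immediately, and indeed this is the promised simplification of the original argument of \cite{GHVT}.

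\begin{proof}
Regard $I_X$ as a standard-graded ideal of $R = k[x_0,x_1,x_2,x_3]$; it is the saturated homogeneous ideal defining the subscheme $C\subset\P^3$ which is the union of the lines corresponding to the points of $X$ (as in the discussion preceding Lemma~\ref{rem:points in product as lines}). Since $C$ is a finite union of lines, $\dim C = 1$ and hence $\codim(C) = 2$. By hypothesis $X$, and therefore $C$, is arithmetically Cohen-Macaulay. By Lemma~\ref{rem:points in product as lines}, $C$ is a local complete intersection. Thus $C\subset\P^n$ with $n=3$ satisfies all the hypotheses of Corollary~\ref{thm:all powers equal 2}. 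Applying that corollary with $n=3$, conditions (a) $I_X^{(3)}=I_X^3$ and (b) $I_X^{(m)}=I_X^m$ for all $m\ge 1$ are equivalent, which is the assertion.
\end{proof}
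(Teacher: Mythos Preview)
Your proof is correct and follows exactly the same approach as the paper: view $X$ as a union of lines in $\P^3$, invoke Lemma~\ref{rem:points in product as lines} to get the local complete intersection property, and then apply Corollary~\ref{thm:all powers equal 2} with $n=3$. You have merely spelled out the verification of the hypotheses in slightly more detail than the paper's three-line proof.
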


\begin{proof}
We will view $X$ as a union of lines in $\mathbb P^3$. By Lemma
\ref{rem:points in product as lines}, $X$ is locally a complete
intersection.   Now apply Theorem \ref{thm:gen all powers equal}.
\end{proof}

In \cite{GHVT}, it was also asked what sets of points $X \subseteq
\mathbb{P}^1 \times \mathbb{P}^1$ satisfy $I_X^3 =
I_X^{(3)}$ and, in particular, if there is a geometric classification of such points.
The authors proposed such a classification.  We require
the following notation. Let $\pi_1:\P^1 \times \P^1
\rightarrow \P^1$ denote the natural projection onto the first factor.
If $X \subseteq \P^1 \times \P^1$ is a finite set of reduced points,
$\pi_1(X) = \{A_1,\ldots,A_h\}$ is the set of distinct first
coordinates that appear in $X$.  For $i=1,\ldots,h$,
set $\alpha_i = |X \cap \pi_1^{-1}(A_i)|$, i.e., the number
of points in $X$ whose first coordinate is $A_i$.  After
relabeling the $A_i$'s so that $\alpha_i \geq \alpha_{i+1}$ for
$i = 1,\ldots,h-1$, we set $\alpha_X = (\alpha_1,\ldots,\alpha_h)$.

\begin{remark}
One of the themes of the monograph \cite{GVBook} is to demonstrate that
when $X$ is an ACM set of points in $\P^1 \times \P^1$,
 many of the homological invariants of $I_X$, e.g.,
bigraded Betti numbers, Hilbert function, can be computed
directly from the tuple $\alpha_X$.    As shown in the next corollary,
$\alpha_X$ can also be used to determine when $I_X^{(m)} = I_X^m$.
\end{remark}

We now prove \cite[Conjecture 4.1]{GHVT}.  In fact,
we give a slightly stronger version.

\begin{corollary} \label{P1xP1 result}
Let $X \subseteq \mathbb{P}^1 \times \mathbb{P}^1$ be any ACM
set of points.  Then
\medskip
\begin{itemize}
\item[(a)] $I_X^2 = I_X^{(2)}$.
\medskip
\item[(b)] The following are equivalent:
\medskip
\begin{itemize}
\item[(i)] $I_X^2$ defines an ACM scheme;
\item[(ii)] $I_X^3 = I_X^{(3)}$ is the saturated ideal of an ACM scheme;
\item[(iii)] $X$ is a complete intersection;
\item[(iv)] $\alpha_X = (a,a,\ldots,a)$ for some integer $a \geq 1$.
\end{itemize}
\medskip
\item[(c)] The following are equivalent:
\medskip
\begin{itemize}
\item[(i)] $I_X^3 = I_X^{(3)}$ is the saturated ideal of a non-ACM scheme;
\item[(ii)] $I_X$ is an almost complete intersection;
\item[(iii)] $\alpha_X = (a,\ldots,a,b,\ldots,b)$ for integers $a > b \geq 1$.
\end{itemize}
\end{itemize}
\end{corollary}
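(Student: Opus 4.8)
The plan is to convert every assertion into a statement about the number $\mu(I_X)$ of minimal generators of $I_X$, using Corollary~\ref{thm:all powers equal 2} together with the explicit resolutions furnished by Theorem~\ref{thm:EN exact}, and then to translate conditions on $\mu(I_X)$ into conditions on $\alpha_X$ via the structure theory of ACM points in $\popo$ recorded in \cite{GVBook}. Throughout, $X$ is viewed as a union of lines in $\P^3$, so that $R = k[x_0,x_1,x_2,x_3]$, $\codim X = 2$, $\dim R/I_X = 2$, and, by Lemma~\ref{rem:points in product as lines}, $I_X$ is a codimension two perfect local complete intersection ideal. Hence Corollary~\ref{thm:all powers equal 2} applies with $n = 3$: its last sentence, applied with $m = 2 < 3$, already yields part~(a), and it also shows that $I_X^{(3)} = I_X^3$ if and only if $\mu(I_X) \le 3$.

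The engine for the Cohen--Macaulay statements is the minimal free resolution of $I_X^m$ provided by Theorem~\ref{thm:EN exact}, whose hypothesis $\min\{\rk G - 1,\, m\} \le n$ holds for $m = 2, 3$ because then $\min\{\rk G - 1,\, m\} \le m \le 3$. Writing $0 \to F \to G \to I_X \to 0$ for the minimal presentation, one has $\rk F = \mu(I_X) - 1$ and $\bigwedge^j F = 0$ for $j > \rk F$, so the complex of Lemma~\ref{lem:EN-complex} resolving $I_X^m$ has length $\min\{m,\, \mu(I_X) - 1\}$. The Auslander--Buchsbaum formula then gives
\[
\depth R/I_X^m = 3 - \min\{m,\, \mu(I_X) - 1\},
\]
so, since $\dim R/I_X^m = 2$, for every $m \ge 2$ the ring $R/I_X^m$ is Cohen--Macaulay exactly when $\mu(I_X) = 2$, whereas when $\mu(I_X) = 3$ one gets $\depth R/I_X^m = 1$, meaning that $I_X^m$ is saturated but $R/I_X^m$ is not Cohen--Macaulay.

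Assembling the equivalences is then bookkeeping. A codimension two perfect ideal with two generators is generated by a regular sequence, so ``$X$ is a complete intersection'' is the same as $\mu(I_X) = 2$; by the depth formula with $m = 2$ this is equivalent to (b)(i), and with $m = 3$ --- using that $\mu(I_X) \le 3$ forces $I_X^3 = I_X^{(3)}$, which has positive depth and hence is saturated --- it is equivalent to (b)(ii). The equivalence with (b)(iv) is the classical fact that an ACM set of points in $\popo$ is a complete intersection precisely when $\alpha_X$ is constant. For part~(c): ``$I_X$ is an almost complete intersection'' means $\mu(I_X) = 3$ by definition, and the depth formula with $m = 3$ shows this is equivalent to (c)(i); conversely (c)(i) excludes $\mu(I_X) \ge 4$ (else $I_X^3 \ne I_X^{(3)}$ by Corollary~\ref{thm:all powers equal 2}) and excludes $\mu(I_X) = 2$ (else $R/I_X^3$ would be Cohen--Macaulay), leaving $\mu(I_X) = 3$. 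Finally, $\mu(I_X) = 3$ is equivalent to $\alpha_X$ taking exactly two distinct values; more generally $\mu(I_X)$ is one more than the number of distinct parts of $\alpha_X$, which is read off from the description of the resolution of $I_X$ in terms of $\alpha_X$ in \cite{GVBook}.

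The only place real work occurs is the determination of exactly when $R/I_X^2$ and $R/I_X^3$ are Cohen--Macaulay, and this is handled cleanly by the explicit resolution of Theorem~\ref{thm:EN exact}; everything else is definitional or a citation to the combinatorics of $\alpha_X$. The subtlety to keep in mind is that (b)(i) and (b)(ii) refer to different powers yet are claimed equivalent: the uniform depth formula above makes this transparent, but one must also note separately that in the complete intersection case $I_X^3$ is genuinely saturated, which holds because its depth equals $2 > 0$.
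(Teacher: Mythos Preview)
Your proof is correct and follows essentially the same approach as the paper's: both reduce everything to the length of the minimal free resolution of $I_X^m$ furnished by Theorem~\ref{thm:EN exact} (equivalently, to the vanishing of $\bigwedge^j F$), and both invoke \cite{GVBook} for the translation between $\mu(I_X)$ and the shape of $\alpha_X$. Your packaging via the explicit depth formula $\depth R/I_X^m = 3 - \min\{m,\,\mu(I_X)-1\}$ is slightly more systematic than the paper's case-by-case check of which exterior powers vanish, but the content is identical.
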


\begin{proof}
Since $X$ (viewed as a union of lines in $\mathbb P^3$) is locally a
complete intersection, for any $m$ the condition that $I_X^m =
I_X^{(m)}$ is equivalent to the condition that $I_X^m$ is
saturated., Equivalently, $\operatorname{proj-dim}(I_X^m)\leq 2$. In the former case
the scheme it defines is ACM; in the latter case it is not. Part
(a) was first proved in \cite[Theorem 2.6]{GHVT}, but it also
follows from Corollary \ref{cor:res powers} (a). From Corollary
\ref{cor:res powers} (a) we see that the scheme defined by
$I_X^2$ is ACM if and only if $\bigwedge^2 F = 0$, i.e., ${\rm rank}(F)=1$, meaning that $X$ is a complete intersection.

That $I_X^3$ is the saturated ideal of an ACM scheme
is equivalent by Corollary \ref{cor:res powers} (b) to $\bigwedge^2F=0$, which means again
that $X$ is a complete intersection. The equivalence of (iii) and (iv)
in (b) is \cite[Theorem 5.13]{GVBook}.
This proves (b).

To prove (c) we need the resolution to be one step longer, which is
equivalent to $\bigwedge^2 F \neq 0$ and $\bigwedge^3 F = 0$,
i.e., $F$ has rank 2, meaning that $I_X$ has three minimal generators.
By \cite[Corollary 5.6]{GVBook}, $I_X$ has three minimal generators
if and only if there exist integers $a,b$ with $a> b$ such that
 $\alpha_X = (a,\ldots,a,b,\ldots,b)$.
\end{proof}

Guardo and Van Tuyl give an algorithm
\cite[Algorithm 5.1]{GVT2} to compute
the bigraded Betti numbers for any set of double points in $\P^1 \times
\P^1$ provided that the support is ACM.  Although we will not reproduce
the algorithm here, it was shown that the bigraded Betti numbers of
a set of double points only depend upon the tuple $\alpha_X
= (\alpha_1,\ldots,\alpha_h)$ describing the support $X$.  We
can now deduce this result from our new work.

\begin{corollary}
Let $Z \subseteq \P^1 \times \P^1$ be a set of fat points where
every point has multiplicity two.  If $X$ is the support of $Z$
and if $X$ is ACM, then there exists an algorithm to
compute the bigraded Betti numbers of $I_Z$ using only
$\alpha_X$.
\end{corollary}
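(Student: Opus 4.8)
The plan is to reduce the bigraded Betti numbers of $I_Z$ to those of $I_X$, which are already known to depend only on $\alpha_X$. First observe that, since every point of $Z$ has multiplicity two, $I_Z = \bigcap_{P \in X} I_P^2$. As each $I_P$ is a bihomogeneous complete intersection, $I_P^2$ is $I_P$-primary, and localizing $I_X^2$ at the minimal prime $I_P$ of the reduced scheme $X$ recovers exactly $I_P^2$; hence $I_Z = I_X^{(2)}$. Viewing $X$ as a union of lines in $\P^3$, Lemma \ref{rem:points in product as lines} shows it is a local complete intersection, and it is ACM by hypothesis, so Corollary \ref{prop:compare lci}(a) (or Corollary \ref{P1xP1 result}(a)) gives $I_X^{(2)} = I_X^2$. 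Therefore $I_Z = I_X^2$.

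Next, fix a bigraded minimal free resolution $0 \to F \stackrel{\ffi}{\longrightarrow} G \to I_X \to 0$; such a Hilbert--Burch resolution exists because $X$ is ACM of codimension two in $\P^3$, and the bigraded twists occurring in $F$ and $G$ are determined explicitly by $\alpha_X$ (classical; see \cite{GVBook}). The union of lines $X$ is reduced, hence a generic complete intersection, so Corollary \ref{cor:res powers}(a) applies and the complex
\[
0 \to \bigwedge^2 F \to F \otimes G \to \Sym^2 G \to I_X^2 \to 0
\]
is a graded minimal free resolution of $I_X^2 = I_Z$. The Koszul/Eagon--Northcott construction of Lemma \ref{lem:EN-complex} is functorial in the presentation $\ffi$, so it respects the bigrading of $R = k[x_0,x_1,x_2,x_3]$, and minimality of $\ffi$ forces all its differentials to be minimal; hence the displayed complex is in fact a \emph{bi}graded minimal free resolution of $I_Z$.

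Finally, because this resolution is minimal, the free modules in a minimal bigraded free resolution of $I_Z$ are precisely $\Sym^2 G$, $F \otimes G$, and $\bigwedge^2 F$. Each is obtained from the bigraded free modules $F$ and $G$ by a completely explicit combinatorial rule: writing $F = \bigoplus_i R(-\mathbf a_i)$ and $G = \bigoplus_j R(-\mathbf b_j)$, one has $\Sym^2 G = \bigoplus_{i \le j} R(-\mathbf b_i - \mathbf b_j)$, $F \otimes G = \bigoplus_{i,j} R(-\mathbf a_i - \mathbf b_j)$, and $\bigwedge^2 F = \bigoplus_{i<j} R(-\mathbf a_i - \mathbf a_j)$. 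Composing this rule with the known formulas for the bigraded shifts of $F$ and $G$ in terms of $\alpha_X$ produces an explicit, hence algorithmic, description of the bigraded Betti numbers of $I_Z$ from $\alpha_X$ alone. The point requiring care is the bigraded minimality claim of the previous paragraph --- that the passage from the standard-graded Corollary \ref{cor:res powers} to the bihomogeneous setting introduces no cancellation --- but this is immediate from the functoriality of the construction together with the fact that an entry of $\ffi$ is a unit precisely when it has bidegree $(0,0)$.
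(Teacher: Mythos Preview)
Your argument is correct and follows essentially the same route as the paper: identify $I_Z = I_X^{(2)} = I_X^2$ via Corollary~\ref{P1xP1 result}, then read off the bigraded Betti numbers of $I_X^2$ from those of $I_X$ (which depend only on $\alpha_X$ by \cite{GVBook}) using the resolution in Corollary~\ref{cor:res powers}. You supply more detail than the paper does---in particular the justification that $I_Z = I_X^{(2)}$ and the check that the complex of Lemma~\ref{lem:EN-complex} is bigraded and minimal---but the strategy is the same.
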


\begin{proof}
Because $X$ is ACM, the bigraded minimal free resolution of
$I_X$ can be computed directly from $\alpha_X$ (see
\cite[Theorem 5.3]{GVBook}).  By Corollary \ref{P1xP1 result},
$I_Z = I_X^{(2)} = I_X^2$.  We can then use
Corollary \ref{cor:res powers} to compute the bigraded
resolution of $I_X^2$ using the bigraded resolution of $I_X$.
In particular, the bigraded Betti numbers of $I_Z$ only depend
upon knowing $\alpha_X$.
\end{proof}

Note that one can use Corollary \ref{cor:res powers} to write out
all the bigraded Betti numbers.  Although we will not do this here,
we will show how to compute the bigraded Betti numbers
of triple points whose support is an ACM set of points that is
an almost complete intersection.  Except for
the result in the remark below, we are not aware of any similar results of
this type.

\begin{corollary}
Let $Z \subseteq \P^1 \times \P^1$ be a homogeneous set of triple
points (i.e., where every point has multiplicity three) and let  $X$ denote the support of $Z$.  If $I_X$ is an almost
complete intersection with $\alpha_X =
(\underbrace{a,\ldots,a}_c,\underbrace{b,\ldots,b}_d)$,
then $I_Z$ has a bigraded minimal free resolution of the form
\[0 \rightarrow F_2 \rightarrow F_1 \rightarrow F_0 \rightarrow I_Z \rightarrow 0\]
where
\footnotesize
\begin{eqnarray*}
F_0 &  = & R(-3c-3d,0)\oplus R(-3c-2d,-b) \oplus R(-2c-2d,-a) \oplus R(-3c-d,-2b) \oplus R(-2c-d,-b-a) \oplus \\
&&R(-c-d,-2a)\oplus R(-3c,-3b) \oplus R(-2c,-2b-a) \oplus R(-c,-b-2a) \oplus R(0,-3a)\\
F_1 & = &R(-c,-3a)\oplus R(-2c,-2a-b) \oplus R(-3c,-a-2b) \oplus R(-c-d,-2a-b)   \oplus R(-2c-d,-a-2b) \oplus \\
&&R(-3c-d,-3b) \oplus R(-2c-d,-2a) \oplus R(-3c-d,-a-b)\oplus R(-2c-2d,-a-b) \oplus \\
&& R(-3c-2d,-2b) \oplus R(-3c-2d,-a) \oplus R (-3c-3d,-b)\\
F_2 & = & R(-3c-2d,-b-a) \oplus R(-3c-d,-a-2b) \oplus R(-2c-d,-2a-b).
\end{eqnarray*}
\normalsize
\end{corollary}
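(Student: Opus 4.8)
The plan is to read the resolution of $I_Z$ off the Hilbert--Burch resolution of $I_X$ by way of Corollary~\ref{cor:res powers}(b). First I would pin down the resolution of $I_X$ itself. Since $I_X$ is an almost complete intersection, Corollary~\ref{P1xP1 result}(c) forces $a>b\geq 1$ (so in particular $c,d\geq 1$), and reading the twists off the description of bigraded minimal free resolutions of ACM sets of points in $\popo$ in \cite[Theorem~5.3]{GVBook} (see also \cite[Corollary~5.6]{GVBook}), the bigraded minimal free resolution of $I_X$ is $0\to F\stackrel{\ffi}{\longrightarrow}G\to I_X\to 0$ with
\[
G=R(-c-d,0)\oplus R(-c,-b)\oplus R(0,-a),\qquad F=R(-c,-a)\oplus R(-c-d,-b).
\]
I abbreviate the generator bidegrees by $g_1=(c+d,0)$, $g_2=(c,b)$, $g_3=(0,a)$ and the first-syzygy bidegrees by $f_1=(c,a)$, $f_2=(c+d,b)$, and record the consistency check $f_1+f_2=g_1+g_2+g_3$, which is forced in codimension two. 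Confirming that $G$ and $F$ carry exactly these twists, by unwinding the $\alpha_X$-combinatorics of \cite{GVBook}, is the one step where I would proceed with care.

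Next I would identify $I_Z$ with $I_X^3$. Viewing $X$ as a union of lines in $\P^3$, Lemma~\ref{rem:points in product as lines} shows $X$ is a local complete intersection, so --- exactly as in the proof of Corollary~\ref{P1xP1 result} for double points, via Lemma~\ref{LCI} --- the fat point scheme $Z$ with support $X$ and every multiplicity equal to $3$ satisfies $I_Z=I_X^{(3)}$; and since $X$ is also ACM and $I_X$ has exactly $3=n$ minimal generators, Corollary~\ref{thm:all powers equal 2} gives $I_X^{(3)}=I_X^3$, whence $I_Z=I_X^3$. As $X\subset\P^3$ is an ACM curve that is a local complete intersection (hence also a generic complete intersection), Corollary~\ref{cor:res powers}(b) applies and yields a minimal free resolution
\[
0\to \bigwedge^{3}F\to \bigwedge^{2}F\otimes G\to F\otimes \Sym^{2}G\to \Sym^{3}G\to I_Z\to 0,
\]
in which $\bigwedge^{3}F=0$ because $\rk F=2$. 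This is asserted there as a minimal resolution in the standard grading; but the exterior powers $\bigwedge^{i}F$, the symmetric powers $\Sym^{j}G$, the tensor products over $R$, and the Koszul construction of Lemma~\ref{lem:EN-complex} are all functorial, and $R$ carries the $\popo$-bigrading under which the above resolution of $I_X$ is bigraded, so the same complex is automatically the bigraded minimal free resolution, with the bidegree twists of each term $\bigwedge^{i}F\otimes \Sym^{j}G$ inherited from those of $F$ and $G$; its maps remain minimal because $\ffi$ is, again by Lemma~\ref{lem:EN-complex}.

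It then remains only to compute the three nonzero free modules explicitly. Setting $F_0=\Sym^{3}G$, $F_1=F\otimes \Sym^{2}G$ and $F_2=\bigwedge^{2}F\otimes G$, the standard decompositions of symmetric and exterior powers of graded free modules give
\[
F_0=\bigoplus_{1\le i\le j\le k\le 3}R(-g_i-g_j-g_k),\qquad F_2=\bigoplus_{i=1}^{3}R(-f_1-f_2-g_i),
\]
\[
F_1=\bigoplus_{\ell=1}^{2}\ \bigoplus_{1\le i\le j\le 3}R(-f_\ell-g_i-g_j),
\]
of ranks $\binom{5}{3}=10$, $3$, and $2\binom{4}{2}=12$ respectively, matching the ranks in the statement. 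Substituting $g_1=(c+d,0)$, $g_2=(c,b)$, $g_3=(0,a)$, $f_1=(c,a)$, $f_2=(c+d,b)$ and collecting the resulting bidegrees reproduces exactly the modules $F_0$, $F_1$, $F_2$ displayed above. The main obstacle is not conceptual --- there is no cancellation to worry about, since every differential is minimal --- but the purely clerical task of recording these symmetric- and exterior-power twists without slips, together with the initial verification that $G$ and $F$ are as claimed; the substantive inputs, namely the shape of the resolution of $I_X$ and the fact that the complex of Lemma~\ref{lem:EN-complex} resolves $I_X^3$, are already supplied by \cite{GVBook} and Corollary~\ref{cor:res powers}.
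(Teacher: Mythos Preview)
Your proposal is correct and follows essentially the same approach as the paper: use \cite[Theorem~5.3]{GVBook} to write down the bigraded resolution of $I_X$, invoke Corollary~\ref{P1xP1 result} to identify $I_Z=I_X^{(3)}=I_X^3$, and then apply Corollary~\ref{cor:res powers}(b) to read off the resolution of $I_X^3$. You supply more detail than the paper (the explicit symmetric/exterior twist bookkeeping and the remark that the construction respects the bigrading), but the strategy is identical.
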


\begin{proof}
Because $I_X$ is an almost complete intersection, Corollary
\ref{P1xP1 result}  implies that $I_Z = I_{X}^{(3)} = I_X^3$.
So the bigraded resolution of $I_Z$ can be computed using
Corollary
\ref{cor:res powers} if we know the bigraded resolution
of $I_X$.  But by \cite[Theorem 5.3]{GVBook} the bigraded resolution of an almost complete intersection $I_X$
with  $\alpha_X =
(\underbrace{a,\ldots,a}_c,\underbrace{b,\ldots,b}_d)$
 is
\[0 \rightarrow R(-c-d,-b) \oplus R(-c,-a)
\rightarrow R(-c-d,0) \oplus R(-c,-b) \oplus R(0,-a) \rightarrow I_X
\rightarrow 0.\]
\end{proof}

\begin{remark}
In the previous statement, the set of fat points all have
multiplicity three.  Favacchio and Guardo \cite{FG} have
generalized this result.  In particular, they have shown that if
$Z$ is set of fat points in $\mathbb{P}^1 \times \mathbb{P}^1$
whose support is an almost complete intersection,  one can  weaken the hypothesis that the fat points are all homogeneous of degree three
to construct a set of nonhomogeneous fat points (in a controlled fashion),
 and still prove that $I_Z^{(m)} = I_Z^m$ for
all $m \geq 1.$
\end{remark}


\section{Application 2: a Question of R\"{o}mer}
\label{sect:Romer}

In this last section we show how one can use Theorem \ref{thm:EN exact} to give
further evidence for a question of R\"omer \cite{R}.  We begin by
defining and introducing the relevant notation.

Let $I$ be a homogeneous ideal of $R = k[x_0,\ldots,x_n]$.
The  graded minimal free resolution of $R/I$ has the form
\[0 \rightarrow F_p \rightarrow F_{p-1}
\rightarrow \cdots \rightarrow F_1 \rightarrow R
 \rightarrow R/I \rightarrow 0 \]
where $F_i = \bigoplus_{j\in\mathbb{Z}}
R(-j)^{\beta_{i,j}(R/I)}$. The number $p =
\operatorname{proj-dim}(R/I)$ is the {\it projective dimension},
and the numbers $\beta_{i,j}(R/I)$
are the $(i,j)$-th {\it graded Betti numbers}
of $R/I$.  The {\it $i$-th Betti number of} $R/I$ is
$\beta_i(R/I) = \sum_{j \in \mathbb{Z}} \beta_{i,j}(R/I)$.

R\"omer \cite{R} initiated an investigation into the relationship
between the $i$-th Betti numbers of $I$ and the shifts that appear
in the graded minimal free resolution.
In particular, R\"omer asked the following question.

\begin{question}\label{romer}
Let $I$ be a homogeneous ideal of $R = k[x_0,\ldots,x_n]$. Does the
following bound hold for all $i=1,\ldots,p$:
\begin{equation*}
\beta_i(R/I) \leq \frac{1}{(i-1)!(p-i)!}\prod_{j\neq i}{M_j}
\end{equation*}
where $M_i := \max\{j ~|~ \beta_{i,j}(R/I) \neq 0\}$?
\end{question}

\noindent
R\"{o}mer showed that Question \ref{romer} is true for all
codimension two Cohen-Macaulay ideals, while Guardo and Van Tuyl
\cite{GVT2} verified the question for any set of double
points in $\P^1 \times \P^1$ whose support is ACM.  We now
use Theorem \ref{thm:EN exact} to extend the family of known
positive answers to Question \ref{romer}.  In the statement
below, recall that $\alpha(I) = \min\{i ~|~ (I)_i \neq 0\}$.

\begin{theorem}\label{powercodim2}
Consider a homogeneous perfect
ideal $I \subset R$ of codimension two
that is also locally a complete intersection.
Fix an integer $m \in \{1,\ldots,n\}$.
Then
\[
\beta_i(R/I^m) \leq \frac{1}{(i-1)!(m+1-i)!}\prod_{j\neq i}{M_j}
~~\mbox{for all $1\leq i \leq m+1$}
\]
where $M_i := \max\{j ~|~ \beta_{i,j}(R/I) \neq 0\}$.
\end{theorem}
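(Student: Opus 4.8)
The plan is to use Theorem \ref{thm:EN exact} to get an explicit handle on the graded minimal free resolution of $R/I^m$, and then translate the combinatorial bound in Question \ref{romer} into a statement about the twists appearing in the resolution of $R/I$ itself. Since $I$ is a perfect codimension two ideal that is a local complete intersection and $m \le n$, the hypothesis $\min\{\rk G - 1, m\} \le n$ of Theorem \ref{thm:EN exact} holds (because $m \le n$), so the complex of Lemma \ref{lem:EN-complex} is a graded minimal free resolution of $I^m$. Prepending $0 \to \Sym^m I \to \Sym^m G \to \cdots$ with the surjection $\Sym^m G \to R$ in the obvious way, the minimal free resolution of $R/I^m$ has length $m+1$ and its $i$-th free module is $\bigwedge^i F \otimes \Sym^{m-i} G$ for $1 \le i \le m+1$ (with the convention $\Sym^{-1} G = 0$ handled at the top, where $F_{m+1} = \bigwedge^m F$). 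In particular $\beta_i(R/I^m) = \rk(\bigwedge^i F) \cdot \rk(\Sym^{m-i} G) = \binom{\rk F}{i}\binom{\rk G + m - i - 1}{m-i}$ — a clean closed form that is the crux of the whole argument.

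Next I would compute the top twists $M_i$ of the resolution of $R/I^m$ in terms of the twists of the resolution $0 \to F \to G \to I \to 0$. Write $F = \bigoplus_a R(-f_a)$ and $G = \bigoplus_b R(-g_b)$ with the $f_a$ listed in increasing order and the $g_b$ likewise; the maximal shift in $\bigwedge^i F$ is the sum of the $i$ largest $f_a$'s, and the maximal shift in $\Sym^{m-i} G$ is $(m-i)$ times the largest $g_b$, so $M_i(R/I^m)$ is the sum of these two quantities. Since the resolution of $R/I$ is $0 \to F \to G \to R \to 0$, the numbers $M_1(R/I), M_2(R/I)$ are exactly (the largest $g_b$) and (the largest $f_a$), so the right-hand side of R\"omer's inequality — which involves the $M_j(R/I)$ for $j \neq i$ — unwinds into a product where each factor is of the form (largest $f$) or (largest $g$). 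The remaining work is then a purely numerical comparison: show that
\[
\binom{\rk F}{i}\binom{\rk G + m - i - 1}{m-i} \;\le\; \frac{1}{(i-1)!\,(m+1-i)!}\prod_{j\neq i} M_j,
\]
where the $M_j$ on the right are the ones attached to $R/I^m$ built from the $M_j(R/I)$ as above. Here one uses that each $f_a \ge 2$ and each $g_b \ge 1$ (minimality of the resolution forces strictly positive shifts, and in codimension two the second syzygies have degree at least two), so that every factor $M_j$ on the right dominates the corresponding binomial/multinomial count coming from the ranks.

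The main obstacle, I expect, is the bookkeeping in that final inequality: one must carefully match the $m$ factors $M_j$ ($j \neq i$, with $j$ ranging over $1,\dots,m+1$) against the two binomial coefficients and the factorial $\frac{1}{(i-1)!(m+1-i)!}$, and verify the inequality factor by factor. The cleanest route is probably to split the product $\prod_{j \neq i} M_j$ into the "bottom" factors (those $M_j$ with small $j$, which are bounded below using $M_1(R/I)$ = largest $g_b \ge \alpha(I) \ge 2$) and the "top" factors (large $j$, bounded below using $M_2(R/I)$ = largest $f_a$), estimate $\binom{\rk F}{i} \le \frac{(\rk F)^i}{i!}$ and $\binom{\rk G+m-i-1}{m-i} \le \frac{(\rk G + m - i - 1)^{m-i}}{(m-i)!}$, and check that the surviving powers of the bounds on $M_1(R/I)$ and $M_2(R/I)$ absorb the ranks $\rk F$ and $\rk G$ together with the discrepancy between $i!\,(m-i)!$ and $(i-1)!\,(m+1-i)!$. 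I would also double-check the edge cases $i = 1$ and $i = m+1$ separately, since the convention $\Sym^{-1} G = 0$ and the absence of a genuine "$M_0$" make the indexing slightly delicate there.
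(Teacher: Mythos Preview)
Your overall architecture is right: invoke Theorem~\ref{thm:EN exact} to get the explicit resolution of $I^m$, read off $\beta_i(R/I^m)$ as a product of two binomial coefficients, and then compare with $\prod_{j\neq i} M_j$. Two things, however, deviate from the paper and make your final step harder than it needs to be.

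First, a bookkeeping slip: the $i$-th free module in the resolution of $R/I^m$ is $\bigwedge^{i-1}F\otimes\Sym^{m-i+1}G$, not $\bigwedge^{i}F\otimes\Sym^{m-i}G$. With $d=\mu(I)=\rk G$ and $\rk F=d-1$, this gives
\[
\beta_i(R/I^m)=\binom{d-1}{i-1}\binom{d+m-i}{d-1},
\]
which is what the paper uses. Your formula is off by one.

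Second, and more substantively, your plan for the final inequality is much more intricate than the paper's. You propose to compute each $M_j$ exactly as a sum of the largest $f_a$'s and a multiple of the largest $g_b$, and then to control the Betti number via the crude estimate $\binom{r}{k}\le r^k/k!$. That estimate is lossy enough that closing the gap would require a delicate case analysis, and your sketch does not indicate how to carry it out. The paper sidesteps all of this with a single observation you are missing: by Hilbert--Burch the generators of $I$ are the maximal minors of a $(d-1)\times d$ matrix with entries of degree $\ge 1$, so $\alpha(I)\ge d-1$; hence for $m\ge 2$ one has $d\le m\alpha(I)$, and minimality of the resolution then forces
\[
M_j \;\ge\; m\alpha(I)+(j-1)\;\ge\; d+(j-1)\qquad\text{for every }j.
\]
With this uniform lower bound in hand, the paper simply expands $\beta_i(R/I^m)$ as the product of the $m$ consecutive integers $d-i+1,\dots,d+m-i$ divided by $(i-1)!\,(m+1-i)!$, and checks termwise that this product is at most $\prod_{j\neq i}(d+j-1)\le\prod_{j\neq i}M_j$. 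No crude binomial bounds and no exact computation of the $M_j$ are needed; the whole comparison is a one-line factor-by-factor check. (The case $m=1$ is handled separately by citing R\"omer, since the inequality $d\le m\alpha(I)$ can fail there.) So your route is not wrong in spirit, but it trades a clean two-line estimate for a bookkeeping problem you yourself flag as the ``main obstacle.''
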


\begin{proof}   If $m = 1$, then this result follows from \cite[Corollary 5.2]{R}.
Let $d = \mu(I)$.  By the Hilbert-Burch Theorem,
the ideal $I$ has a minimal resolution of the form:
\[0\rightarrow R^{d-1}
{\rightarrow} R^d \rightarrow
I\rightarrow 0.\]
Furthermore, the minimal generators of $I$ are given by the $(d-1) \times (d-1)$
minors of the Hilbert-Burch matrix.
Since every entry of this matrix is either $0$ or has degree $\geq 1$, we have
$\alpha(I) \geq d-1$.  In particular,  for all $m \geq 2$, we have
$d \leq m\alpha(I)$.

Because of our  hypotheses on $I$ and $m$, Theorem \ref{thm:EN exact}
implies that the complex of Lemma \ref{lem:EN-complex} is a graded minimal
free resolution of $I^m$, and consequently, proj-dim$(R/I^m) = m+1$
and for all $1 \leq i \leq m+1$,
\begin{eqnarray*}
\beta_i(R/I^m) & = &\dim_k \left(\bigwedge^{i-1} R^{d-1} \otimes {\rm Sym}^{m-i+1}(R^d)\right)
= \binom{d-1}{i-1}\binom{d+m-i}{d-1}.
\end{eqnarray*}

We have $\alpha(I^m) = m\alpha(I)$.  As a result, for each $i=1,\ldots,m+1$,
$\beta_{i,j}(R/I^m) = 0$ for all $j < m\alpha(I)+(i-1)$.
So, because $d \leq m\alpha(I)$,
we have
\[d+(i-1) \leq m\alpha(I) + (i-1) \leq M_i ~~\mbox{for all $i=1,\ldots,m+1$.}\]

Combining these pieces together now gives
\begin{eqnarray*}
\beta_i(R/I^m) &= &\frac{(d-1)(d-2)\cdots (d-i+1)(d-i)!}{(i-1)!(d-i)!}
\frac{(d+m-i)\cdots (d)(d-1)!}{(m+1-i)!(d-1)!} \\
& = & \frac{\left((d-i+1)(d-i+2)\cdots (d-1)\right)\left((d)(d+1)\cdots (d+m-i) \right)}{(i-1)!(m+1-i)!} \\
& \leq & \frac{\left((d)(d+1) \cdots (d-i-2)\right)\left((d+i)(d+i+1) \cdots (d+m)\right)}{(i-1)!(m+1-i)!} \\
& \leq & \frac{M_1M_2 \cdots M_{i-1}M_{i+1}\cdots M_{m+1}}{(i-1)!(m+1-i)!}.
\end{eqnarray*}
This now verifies the inequality.
\end{proof}

We can reprove and extend \cite[Theorem 6.1]{GVT2}
which first proved the case $m=2$.

\begin{corollary}  Let $X \subseteq \P^1 \times \P^1$ be an ACM set of
points.  If $I = I_X$, then   Question \ref{romer}  has an affirmative answer for $I^m$ with
$m=1,2$ and $3$.
\end{corollary}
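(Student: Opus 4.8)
The plan is to obtain this corollary as an immediate specialization of Theorem \ref{powercodim2}. First I would regard $X$ as a union of lines in $\P^3$, so that the ambient polynomial ring is $R = k[x_0,x_1,x_2,x_3]$ and thus $n = 3$. Then I would check that $I = I_X$ meets the three hypotheses of Theorem \ref{powercodim2}: it has codimension two because a finite union of lines in $\P^3$ is one-dimensional; it is perfect because $X$ being ACM means precisely that $R/I_X$ is Cohen-Macaulay, which is the definition of perfect adopted in Section \ref{sec:res}; and it is a local complete intersection by Lemma \ref{rem:points in product as lines}.

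Granting this, Theorem \ref{powercodim2} applies to $I_X^m$ for each $m \in \{1,\ldots,n\} = \{1,2,3\}$ and delivers exactly the bound asserted in Question \ref{romer}. This reproves the case $m = 1$ (already contained in R\"omer's \cite[Corollary 5.2]{R}) and the case $m = 2$, which recovers \cite[Theorem 6.1]{GVT2} since, via Corollary \ref{P1xP1 result}(a), $I_X^2$ is the defining ideal of the associated set of double points; the case $m = 3$ is the genuinely new content, and it is available precisely because the Segre ambient space $\P^3$ has $n = 3$.

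I do not expect any real obstacle here; the argument amounts to verifying the hypotheses and quoting the theorem. The only subtlety worth stating explicitly is that Question \ref{romer}, the resolution produced by Theorem \ref{powercodim2}, and the invariants $M_i$ are all taken with respect to the standard $\mathbb{Z}$-grading on $k[x_0,x_1,x_2,x_3]$ under which $I_X$ cuts out a union of lines in $\P^3$ — not the bigrading coming from $\P^1 \times \P^1$ — so that the reduction to the hypotheses of Theorem \ref{powercodim2} is literal and no regrading issue arises.
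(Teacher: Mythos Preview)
Your proposal is correct and follows exactly the intended approach: the paper states this corollary without proof, treating it as an immediate consequence of Theorem~\ref{powercodim2} once one views $X$ as a union of lines in $\P^3$ and checks the codimension-two, perfect, and local complete intersection hypotheses (the last via Lemma~\ref{rem:points in product as lines}). Your added remark about working in the standard $\mathbb{Z}$-grading rather than the bigrading is a helpful clarification that the paper leaves implicit.
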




\end{document}